\theoremstyle{plain}
\newtheorem{theorem}{Theorem}
\newtheorem{corollary}[theorem]{Corollary}
\newtheorem{lemma}[theorem]{Lemma}
\newtheorem{proposition}[theorem]{Proposition}
\theoremstyle{definition}
\newtheorem{definition}[theorem]{Definition}
\newtheorem{remark}[theorem]{Remark}
\newcommand{\II}{\mathcal{I}}
\newcommand{\HH}{\mathcal{H}}
\newcommand{\MM}{\mathcal{M}}
\newcommand{\R}{\mathbb{R}}
\newcommand{\N}{\mathbb{N}}
\newcommand{\E}{{\mathbb{E}}}
\renewcommand{\P}{\mathbb{P}}
\newcommand{\F}{{\mathcal{F}}}
\newcommand{\Var}{\mathrm{Var}}
\newcommand{\ve}{v^e}
\newcommand{\vu}{v^u}
\newcommand{\we}{w^e}
\newcommand{\wu}{w^u}
\newcommand{\he}{h^e}
\newcommand{\hu}{h^u}
\newcommand{\Xu}{X^u}
\newcommand{\Xe}{X^e}
\newcommand{\Lu}{L^u}
\newcommand{\Le}{L^e}
\newcommand{\Mu}{M^u}
\newcommand{\Me}{M^e}
\newcommand{\Au}{A^u}
\newcommand{\Bu}{B^u}
\newcommand{\Ae}{A^e}
\renewcommand{\S}{\mathcal{S}}
\newcommand{\1}{\mathbbm{1}}
\newcommand{\equalbydef}{\stackrel{\rm def}{=}}
\newcommand{\subsectionnewline}{\mbox{}\medskip}
\let\oldmarginpar\marginpar
\renewcommand\marginpar[1]{\-\oldmarginpar[\raggedleft\footnotesize #1]%
{\raggedright\footnotesize #1}}
\definecolor{lightyellow}{RGB}{255,255,102}
\begin{document}

\title[]%
{Optimal Online Selection of a Monotone Subsequence: A Central Limit Theorem}
\author[]
{Alessandro Arlotto, Vinh V. Nguyen, \\
and J. Michael Steele}

\thanks{A. Arlotto: The Fuqua School of Business, Duke University, 100 Fuqua Drive, Durham, NC, 27708.
Email address: \texttt{alessandro.arlotto@duke.edu}}

\thanks{V. V. Nguyen: The Fuqua School of Business, Duke University, 100 Fuqua Drive, Durham, NC, 27708.
Email address: \texttt{vinh.v.nguyen@duke.edu}}

\thanks{J. M.
Steele:  Department of Statistics, The Wharton School, University of Pennsylvania, 3730 Walnut Street, Philadelphia, PA, 19104.
Email address: \texttt{steele@wharton.upenn.edu}}

\citationmode{full}

\begin{abstract}
        Consider a sequence of $n$ independent random variables with a common continuous distribution $F$, and consider
        the task of choosing an increasing subsequence where the observations are revealed sequentially and
        where an observation must be accepted or rejected when it is first revealed.
        There is a unique selection policy $\pi_n^*$ that is optimal in the sense that it
        maximizes the expected value of $L_n(\pi_n^*)$, the number of selected observations.
        We investigate the distribution of $L_n(\pi_n^*)$;
        in particular, we obtain a central limit theorem for $L_n(\pi_n^*)$ and a detailed understanding of its
        mean and variance for large $n$. Our results and methods are complementary to the work of \citeasnoun{BruDel:SPA2004} where an
        analogous central limit theorem is found for monotone increasing selections
        from a finite sequence with cardinality $N$ where $N$ is a Poisson
        random variable that is independent of the sequence.

        \bigskip

        \noindent {\sc Key Words.} Bellman equation, online selection, Markov decision problem,
        dynamic programming, monotone subsequence, de-Poissonization,
        martingale central limit theorem, non-homogeneous Markov chain.

        \bigskip

        \noindent {\sc Mathematics Subject Classification (2010).}
        Primary: 60C05, 60G40, 90C40; Secondary:  60F05, 60G42, 90C27, 90C39

\end{abstract}

\date{
first version: August 28, 2014; this version: March 29, 2015.
}

\maketitle



\section{Introduction}

In the problem of \emph{online} selection of a \emph{monotone increasing} subsequence,
a decision maker observes a sequence of independent non-negative random variables $\{X_1, X_2, \ldots, X_n\}$
with common continuous distribution $F$, and the task is to select a subsequence $\{X_{\tau_1}, X_{\tau_2}, \ldots, X_{\tau_j}\}$
such that
$$
X_{\tau_1} \leq  X_{\tau_2} \leq \cdots \leq  X_{\tau_j}
$$
where the indices $1 \leq \tau_1 < \tau_2 < \cdots < \tau_j \leq n$ are stopping times
with respect to the $\sigma$-fields $\F_i = \sigma\{X_1, X_2, \ldots, X_i \}$, $1 \leq i \leq n$.
In other words, at time $i$ when the random variable $X_i$ is first observed, the decision maker
has to choose to accept $X_i$ as a member of the monotone increasing sequence that is under construction,
or to reject $X_i$ from any further consideration.

We call such a sequence of stopping times a \emph{feasible policy}, and we denote the set of all such policies
by $\Pi(n)$.
For any $\pi \in \Pi(n)$, we then let $L_n(\pi)$ be the random variable that counts the number of selections made by policy $\pi$ for the realization
$\{X_1,X_2, \ldots, X_n\}$; that is,
$$
L_n(\pi)=\max \{j: X_{\tau_1} \leq  X_{\tau_2}\leq \cdots \leq  X_{\tau_j} \text{ where } 1\leq \tau_1 < \tau_2 < \cdots < \tau_j \leq n \}.
$$
\citeasnoun{SamSte:AP1981} found that for each $n \geq 1$
there is a unique policy $\pi^*_n \in \Pi(n)$ such that
\begin{equation}\label{eq:uniquepolicy}
\E[L_n(\pi^*_n)] = \sup_{\pi \in \Pi(n)} \E[L_n(\pi)],
\end{equation}
and for such optimal policies one has
\begin{equation}\label{eq:SSasymp}
\E[L_n(\pi^*_n)] \sim ( 2n )^{1/2} \quad \text{as } n \rightarrow \infty.
\end{equation}
\citeasnoun{BruRob:AAP1991} and \citeasnoun{Gne:JAP1999} showed that one actually has the crisp upper bound
\begin{equation}\label{eq:GnedinUpperBound}
\E[L_n(\pi^*_n)] \leq (2n)^{1/2} \quad \quad \text{ for all } n \geq 1,
\end{equation}
and, as corollaries of related work, \citeasnoun{RheTal:JAP1991}, \citeasnoun{Gne:JAP1999} and \citeasnoun{ArlSte:CPC2011}
all found that there is an asymptotic error rate for the lower bound
\begin{equation}\label{eq:RheeTalagrandLowerBound}
(2n)^{1/2} - O(n^{1/4}) \leq \E[L_n(\pi^*_n)] \quad \quad  \text{ as } n \rightarrow \infty.
\end{equation}
Here, our main goal is to show that $L_n(\pi^*_n)$ satisfies a central limit theorem.

\begin{theorem}[Central Limit Theorem for Optimal Online Monotone Selections]\label{thm:OurCLT}
For any continuous distribution $F$ one has for $n \rightarrow \infty$ that
\begin{equation}\label{eq:mean-bounds-main-theorem}
(2n)^{1/2} - O(\log n) \leq \E[L_n(\pi^*_n)] \leq (2n)^{1/2},
\end{equation}
\begin{equation}\label{eq:variance-bounds-main-theorem}
\frac{1}{3} \E[L_n(\pi^*_n)]  - O(1) \leq \Var[L_n(\pi^*_n)] \leq \frac{1}{3} \E[L_n(\pi^*_n)]  + O( \log n),
\end{equation}
and one has the convergence in distribution
\begin{equation}\label{eq:TheCLT}
\frac{ 3^{1/2}  \{ L_n(\pi^*_n) - ( 2 n )^{1/2} \} }{ (2 n)^{1/4}} \Longrightarrow N(0,1).
\end{equation}
\end{theorem}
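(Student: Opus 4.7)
First I would reduce to the case $X_i \sim U[0,1]$ via the probability integral transform, which preserves the selection problem since $\pi_n^*$ depends only on the relative order of the observations. Writing $Y_i$ for the last selected value up through time $i$ (with $Y_0 = 0$) and $L_i$ for the number of selections among $\{X_1,\dots,X_i\}$, the pair $(Y_i, L_i)$ becomes a controlled Markov chain under $\pi_n^*$. The Bellman value function $v_k(y)$, the optimal expected number of selections from $k$ future uniform observations given last selection $y$, satisfies the scaling identity $v_k(y) = \E[u_M]$ where $M \sim \mathrm{Bin}(k, 1-y)$ and $u_k \equiv v_k(0)$, since observations landing above $y$ rescale to a fresh uniform problem on $[0,1]$. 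All asymptotics reduce to understanding $u_k$ and the optimal threshold $h_k$ via the Bellman recursion for $u_k$.

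To sharpen the lower bound in \eqref{eq:mean-bounds-main-theorem} from the $O(n^{1/4})$ of \eqref{eq:RheeTalagrandLowerBound} down to $O(\log n)$, I would expand $u_k$ to second order, writing $u_k = (2k)^{1/2} - c + \epsilon_k$, and propagate the error through the Bellman recursion, using the fact that the per-step gain $u_k - u_{k-1}$ is close to $(2k)^{-1/2}$. The self-correcting structure of the recursion accumulates residuals to only $O(\log n)$ rather than the $O(n^{1/4})$ lost by the cruder Rhee--Talagrand / Arlotto--Steele argument. The upper bound in \eqref{eq:mean-bounds-main-theorem} is just \eqref{eq:GnedinUpperBound}.

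The variance bound \eqref{eq:variance-bounds-main-theorem} and the CLT \eqref{eq:TheCLT} I would obtain together through the Doob martingale. Set $W_i = \E[L_n(\pi_n^*) \mid \F_i] = L_i + v_{n-i}(Y_i)$, so that $D_i = W_i - W_{i-1}$ is a bounded martingale difference sequence and $L_n(\pi_n^*) - \E L_n(\pi_n^*) = \sum_{i=1}^n D_i$. A direct computation splits $\E[D_i^2 \mid \F_{i-1}]$ into a \emph{no-selection} contribution from the drop $v_{n-i+1}(Y_{i-1}) - v_{n-i}(Y_{i-1})$ and a \emph{selection} contribution from the jump from $Y_{i-1}$ to a new $X_i$ in the acceptance window $[Y_{i-1}, Y_{i-1} + h_{n-i+1}(Y_{i-1})]$. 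Using the leading asymptotics of $v_k$ and $h_k$, summing these over $i$ produces $\tfrac{1}{3}\E[L_n(\pi_n^*)]$ up to logarithmic corrections, which yields both the variance bound and the limit of the quadratic variation. The CLT then follows from the martingale CLT once the conditional Lindeberg condition is checked, which is immediate from the uniform bound $|D_i| = O(1)$ combined with the normalization $(2n)^{1/4} \to \infty$.

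The main obstacle lies in the precise non-asymptotic estimates at two places: sharpening the mean error from $O(n^{1/4})$ to $O(\log n)$, and showing that the quadratic variation of the Doob martingale concentrates at $\tfrac{1}{3}\E L_n(\pi_n^*)$ with only logarithmic fluctuations. Both require sharp control of the value-function increments $v_k(y) - v_{k-1}(y)$ and $v_{k-1}(x) - v_{k-1}(y)$ for $x$ in the acceptance window, integrated against the dynamics of the state chain $(Y_i)$ under $\pi_n^*$. The Poissonized martingale calculations of Bruss and Delbaen provide a useful template, but translating them to the fixed-$n$ non-homogeneous Markov setting will require a careful de-Poissonization, which is where the bulk of the technical effort will lie.
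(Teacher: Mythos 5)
Your overall architecture is right — the Doob martingale $Y_i = L_i(\pi_n^*) + v_{n-i}(M_i)$, the split of its differences into selection and no-selection contributions, the reduction of the CLT to negligibility plus a weak law for the conditional variances — and this matches the paper's Proposition \ref{pr:Optimality-martingale} and Section \ref{se:CLT} precisely. The gaps are in the three substantive estimates, and they are not small.

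The ``scaling identity'' $v_k(y) = \E[u_M]$ with $M \sim \mathrm{Bin}(k,1-y)$ and $u_k = v_k(0)$ is false; it is only the one-sided inequality $v_k(y) \leq \E[u_M]$, a state-level analogue of Proposition \ref{prop:info-lower-bound}. Concretely, in the uniform model one computes $u_2 = 3/2$, $u_3 = \sqrt{3} + 1/6$, and $v_3(1/2) = 7/6 \approx 1.167$, while $\E[u_M]$ for $M \sim \mathrm{Bin}(3,1/2)$ equals $23/24 + \sqrt{3}/8 \approx 1.175 > v_3(1/2)$. The reason is exactly the information mismatch the paper stresses in Section \ref{se:Mean-DePoissonization}: the decision maker at state $(k,y)$ never learns how many of the $k$ remaining draws will exceed $y$, so the problem is \emph{not} equivalent to a fresh problem with a known random horizon, and the optimal policies differ. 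This breaks your proposed reduction of everything to $u_k$.

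For the mean lower bound $(2n)^{1/2} - O(\log n)$, the paper does \emph{not} propagate a second-order expansion of $u_k$ through the Bellman recursion. It proves the submodularity Proposition \ref{pr:value-functions-submodularity}, deduces concavity of $n \mapsto \E[L_n(\pi_n^*)]$ (Corollary \ref{cor:mean-concave-n}), and then de-Poissonizes the Bruss--Delbaen estimate \eqref{eq:BD-optimal-mean} by Jensen's inequality (Proposition \ref{prop:info-lower-bound} and Corollary \ref{cor:mean-lower-bound}). The authors state explicitly that this is the only proof they know; the direct-expansion route you sketch is exactly the one that the prior literature used to get $O(n^{1/4})$ and no better, so you would need a genuinely new self-correction argument, not just ``propagate the error.''

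The central missing idea, though, is in the variance and conditional-variance bounds. The paper does not obtain $\tfrac13 v_{n-i}(M_i) - 2 \leq w_{n-i}(M_i) \leq \tfrac13 v_{n-i}(M_i) + O(\log(n-i))$ from ``leading asymptotics of $v_k$ and $h_k$.'' It proves a structural curvature dichotomy: in the uniform model the value functions are \emph{concave} (Lemma \ref{lm:value-function-concave-uniform}), so the chord from $(M_{j-1},1)$ to $(h_{n-j+1}(M_{j-1}),0)$ sits \emph{below} the integrand $\{1 + v^u_{n-j}(x) - v^u_{n-j}(M_{j-1})\}$ and integration of the squared chord gives the lower bound with constant $\tfrac13$ (Proposition \ref{pr:variance-lower-bound-uniform}); in the exponential model the value functions are \emph{convex} (Lemma \ref{lm:value-functions-exponential-convexity}), the chord sits \emph{above}, and one gets the matching upper bound (Proposition \ref{pr:Variance-upper-bound-exponential}). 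Since the distribution of $L_n(\pi_n^*)$ is the same under any continuous $F$, both transfer. Your plan stays entirely in the uniform model, where only the lower bound is available by this mechanism, and the authors report that dogged analysis of the uniform model alone did not yield an adequate upper bound. Likewise, the concentration of the predictable quadratic variation in \eqref{eq:Little-Oh-Last} is handled by a second martingale built from the conditional-variance functions $w_{n-i}$ and controlled by the same two-sided curvature bounds — not by de-Poissonizing Bruss--Delbaen, which the paper argues cannot work because policies do not de-Poissonize.
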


Two connections help to put this result in context.
First, it is useful to recall the analogous problem of
offline (or full information) subsequence selection, for which there is a remarkably rich literature. Second, there
are closely related results of \citename{BruDel:SPA2001} (\citeyear*{BruDel:SPA2001}, \citeyear*{BruDel:SPA2004}) that deal with sequential
selection where the number of values to be seen is random with a Poisson distribution.

\subsection*{\sc First Connection: The Tracy-Widom Law}\subsectionnewline

If one knows all of the values $\{X_1, X_2, \ldots, X_n\}$
at the time the selections begin, then decision maker can select a
maximal increasing subsequence with length
\begin{equation}\label{eq:LnOffLineDef}
L_n=\max \{j: X_{i_1} \leq  X_{i_2}\leq  \cdots \leq  X_{i_j} \text{ where } 1\leq i_1 < i_2 <  \cdots < i_j \leq n \}.
\end{equation}
This \emph{full information} or \emph{offline} length $L_n$ has been studied extensively.

The question of determining the distribution of  $L_n$ was first raised by \citeasnoun{Ulam1961}, but the analysis
of $L_n$ was taken up in earnest
by \citeasnoun{Ham:BS1972}, \citeasnoun{Kin:AP1973},
\citeasnoun{LogShe:AM1977}, and \citeasnoun{VerKer:DAN1977} who  established in steps that
\begin{equation*}
\E[L_n] \sim 2  n^{1/2} \quad \text{as } n \rightarrow \infty.
\end{equation*}
Much later H.~Kesten conjectured \citeaffixed[p.~416]{AldDia:AMS1999}{cf.} that there should be positive constants
$\alpha$ and $\beta$ such that
\begin{equation}\label{eq:KestenConjecture}
\E [L_n] = 2 n^{1/2} - \alpha n^{1/6} + o( n^{1/6}) \quad \text{and} \quad
\left\{\Var[L_n]\right\}^{1/2} = \beta n^{1/6} + o( n^{1/6}).
\end{equation}
After subtle progress by  \citeasnoun{Pil:JCTA1990},
\citeasnoun{BolBri:AAP1992}, \citeasnoun{Kim:JCTA1996}, \citeasnoun{BolJan:CGP1997}, and \citeasnoun{OdlRai:CM2000}
this conjecture was settled affirmatively by \citeasnoun{BaiDeiKur:JAMS1999} who proved moreover that
$n^{-1/6} (L_n -2 n^{1/2})$
converges in distribution to the famous Tracy-Widom law which had emerged just a bit earlier
from the theory of random matrices. The recent monograph of \citeasnoun{Rom:CUP2015}
gives a highly readable account
of this development.

One distinction between the online and the offline problems is that, while the means are of the same order in each case, the variances are not of the same order. The standard deviation for offline
selection is of order $n^{1/6}$, but by \eqref{eq:variance-bounds-main-theorem} the standard deviation for the online selection is of order $n^{1/4}$.
Intuitively this difference reflects greater
uncertainty in the online selection problem than in the offline problem, but it is harder
to  imagine why moving to the online formulation would drive one all of the way from the Tracy-Widom law to
the Gaussian law.

\subsection*{\sc Second Connection: The Bruss-Delbaen Central Limit Theorem}\subsectionnewline

Consider the problem of sequential selection of a monotone increasing subsequence from $\{X_1, X_2, \ldots, X_{N_\nu} \}$
where ${N_\nu}$ is a Poisson random variable with mean $\nu$ that is independent of the sequence $\{X_1, X_2, \ldots \}$.
Just as in \eqref{eq:uniquepolicy} there is a unique
sequential policy  that maximizes
the expected number of selections that are made.
If we denote this optimal policy by $\pi^*_{N_\nu}$ then as before $L_{N_\nu} (\pi^*_{N_\nu})$
is the number of selections from $\{X_1, X_2, \ldots, X_{N_\nu}\}$ that are made by the policy $\pi^*_{N_\nu}$.

\citeasnoun{BruDel:SPA2001} proved that, as $\nu  \rightarrow \infty$, one has the mean estimate
\begin{equation}\label{eq:BD-optimal-mean}
\E[L_{N_\nu} (\pi^*_{N_\nu})] = (2 \nu)^{1/2} + O(\log \nu),
\end{equation}
and the variance estimate
\begin{equation*}
\Var[L_{N_\nu} (\pi^*_{N_\nu})] = \frac{1}{3}(2 \nu)^{1/2} + O(\log \nu).
\end{equation*}
Moreover, \citeasnoun{BruDel:SPA2004} proved that, as $\nu  \rightarrow \infty$, one has
the convergence in distribution
\begin{equation*}
\frac{ 3^{1/2}  \{ L_{N_\nu} (\pi^*_{N_\nu}) - ( 2 \nu)^{1/2} \} }{ (2 \nu)^{1/4}} \Longrightarrow N(0,1).
\end{equation*}

One needs to ask if it is possible to
``de-Poissonize" these results to get Theorem \ref{thm:OurCLT}, either in whole or in part.
We show in Section \ref{se:Mean-DePoissonization} that the lower half of \eqref{eq:mean-bounds-main-theorem} can be obtained
from \eqref{eq:BD-optimal-mean} by an easy de-Poissonization argument; in fact,
this is the only proof we know of this bound. In Section \ref{se:Mean-DePoissonization} we also explain as best we can, why no
further parts of Theorem \ref{thm:OurCLT} can be obtained by de-Poissonization.

One can further ask if it might be possible to adapt the \emph{methods}
of \citename{BruDel:SPA2001} (\citeyear*{BruDel:SPA2001}, \citeyear*{BruDel:SPA2004}) to prove Theorem \ref{thm:OurCLT}. The
major benefit of a Poisson horizon is that it gives access to the tools of continuous time Markov processes such as
the infinitesimal generator and Dynkin's martingales. Moreover, in this instance
the associated value function $V(t,x)$ can be written as a function of one variable by the space-time transformation $V(t,x) =
\bar{V}(t(1-x))$.

Here we lack these benefits. We work in discrete time with a known finite horizon,
and our value function $v_k(s)$ permanently depends on the state $s$ and the time to the horizon $k$.
This puts one a long way from the world of
\citename{BruDel:SPA2001} (\citeyear*{BruDel:SPA2001}, \citeyear*{BruDel:SPA2004}). Still,
in Section \ref{se:inferences-uniform-model} we give a brief
proof of the
well-known upper bound \eqref{eq:GnedinUpperBound} that echoes an argument of \citeasnoun[pp.~291--292]{BruDel:SPA2004}.
This seems to be the only instance of an overlap of technique.

\subsection*{\sc Organization of the Analysis}\subsectionnewline

The proof of our central limit theorem has two phases.
In the first phase, we investigate the analytic properties of the value functions
given by framing the selection problem as a Markov decision problem.
Section \ref{se:DP-general-f} addresses the monotonicity and the submodularity of the value functions.
We also obtain that the map $n \mapsto \E[L_{n} (\pi^*_{n})]$ is concave, and
this is used in Section \ref{se:Mean-DePoissonization} to prove the lower half of
\eqref{eq:mean-bounds-main-theorem}; this is our only de-Poissonization argument.

Sections \ref{se:smmoothness-of-value-functions} and \ref{se:DP-special-properties} develop
smoothness and curvature properties of the value functions.
In particular, we find that in the uniform model the value functions are concave as a function of the state variable,
but, for the exponential model, they are convex. This broken symmetry
is surprisingly useful even though the distribution of $L_n(\pi^*_n)$ does not depend on the model distribution $F$.

The second phase of the proof deals with a natural martingale that one obtains from the value
functions. This martingale is defined in Section \ref{se:probabilistic-interpretation-and-Optimality-martingale}, and it is used in
Sections \ref{se:inferences-uniform-model}, \ref{se:inferences-exponential-model} and \ref{se:Variance-Bounds-in-General}
to estimate the conditional variances of $L_n(\pi_n^*)$.
These estimates and a martingale central limit theorem are then used in Section \ref{se:CLT} to complete the proof of Theorem \ref{thm:OurCLT}.
Finally, in Section \ref{se:Conclusions} we comment briefly on two
open problems and the general nature of the methods developed here.

\section{Structure of the Value Functions}\label{se:DP-general-f}


We now let $v_k(s)$ denote the expected value of the number
of monotone increasing selections under the optimal policy when (i) there are $k$ observations 
that remain to be seen and (ii) the value of the most recently selected observation is equal to $s$.
The functions $\{ v_k: 1 \leq k < \infty \}$ are called the \emph{value functions},
and they can be determined recursively.
Specifically, we have the \emph{terminal condition}
$$
v_0 ( s )  = 0 \quad \quad \text{ for all } s \geq 0,
$$
and if we set $F(s)=P(X_i \leq s)$ then for all  $k \geq 1$ and $s\geq 0$ we have the recursion
\begin{equation}\label{eq:Bellman-general-f}
v_k( s )   = F( s ) v_{ k - 1 }( s ) + \int_s^\infty \max \{  v_{ k - 1 }( s ), 1 + v_{ k - 1 }( x ) \} \, dF( x ).
\end{equation}
To see why this equation holds, note that with probability $F(s)$ one is presented
at time $i = n - k + 1$ with a value $X_i$ that is
less than the previously selected value $s$. In this situation, we do not have the opportunity to select $X_i$.
This leaves us with $k-1$ observations to be seen and with the value of the last selected observation, $s$, unchanged.
This possibility contributes the term $F( s ) v_{ k - 1 }( s )$ to our equation.

Now, if the newly presented value satisfies $s \leq X_i$ then we have the \emph{option}
to select or reject $X_i = x$.
If we select $X_i=x$, then the sum of our present reward and expected future reward is  $1 + v_{ k - 1 }( x )$.
On the other hand, if  we choose not to select $X_i = x$, then we have no present reward and the expected future reward is $ v_{ k - 1 }( s )$
since the value of the running maximum is not changed.
Since $X_i$ has distribution $F$,
the expected optimal contribution is given by the second term of equation \eqref{eq:Bellman-general-f}.

The identity \eqref{eq:Bellman-general-f} is called the \emph{Bellman recursion} for the sequential selection problem.
In principle, it tells us everything there
is to know about the value functions; in particular, it determines
$$
\E[L_n(\pi^*_n)] = v_n(0) \quad \quad \text{for all } n \geq 1.
$$
Qualitative information can also be extracted from the recursion \eqref{eq:Bellman-general-f}.
For example, it is immediate from \eqref{eq:Bellman-general-f} that the value functions are always continuous.
More refined properties of the value functions may depend on $F$, and here it is often useful
to consider a special subclass of distributions.

\begin{definition}[Admissible Distribution]
A distribution $F$ is said to be \emph{admissible} if
there is an open interval $\II \subseteq [0, \infty)$ such that
\begin{enumerate}[(i)]
    \item $F$ is differentiable on $\II$,
    \item $F'(x) = f(x) > 0$ for all $x \in \II$, and
    \item $\int_{\II} f(x) \ dx = 1$.
\end{enumerate}
\end{definition}

The next lemma illustrates how admissibility can be used. The result is largely intuitive, but the formal proof
via \eqref{eq:Bellman-general-f} suggests that some care is needed.

\begin{lemma}[Monotonicity of Value Functions]\label{lm:value-function-decreasing-general-f}
For any distribution $F$ the value functions are non-increasing.
Moreover, if $F$ is admissible, then the value functions are strictly decreasing on $\II$.
\end{lemma}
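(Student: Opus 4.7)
The plan is to prove both claims by induction on $k$, working directly from the Bellman recursion \eqref{eq:Bellman-general-f}. The base case $v_0 \equiv 0$ is trivially non-increasing, and the strict claim holds vacuously.

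For the inductive step from $k-1$ to $k$, I would write out $v_k(s_1) - v_k(s_2)$ for $s_1 < s_2$ from \eqref{eq:Bellman-general-f} and regroup. Abbreviating $M(s, x) = \max\{v_{k-1}(s), 1 + v_{k-1}(x)\}$, the goal is to reach the three-term decomposition
$$
v_k(s_1) - v_k(s_2) = F(s_1)\bigl[v_{k-1}(s_1) - v_{k-1}(s_2)\bigr] + \int_{s_2}^\infty \bigl[M(s_1, x) - M(s_2, x)\bigr] \, dF(x) + \int_{s_1}^{s_2} \bigl[M(s_1, x) - v_{k-1}(s_2)\bigr] \, dF(x),
$$
which one gets by splitting $\int_{s_1}^{\infty}$ at $s_2$ and adding and subtracting $F(s_1) v_{k-1}(s_2)$ together with $\int_{s_2}^{\infty} M(s_1,x)\, dF(x)$. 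Under the induction hypothesis that $v_{k-1}$ is non-increasing, each summand is non-negative: the first directly; the second because $M(\cdot, x)$ inherits the non-increasing property of $v_{k-1}$ (the pointwise max of a non-increasing function and a constant is non-increasing); and the third because $M(s_1, x) \geq v_{k-1}(s_1) \geq v_{k-1}(s_2)$.

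For strict monotonicity on $\II$, I would focus on the third summand. If $s_1 < s_2$ both lie in $\II$, then $(s_1, s_2) \subset \II$, and for any $x \in (s_1, s_2)$ the induction hypothesis gives $v_{k-1}(x) \geq v_{k-1}(s_2)$; hence $M(s_1, x) \geq 1 + v_{k-1}(x) \geq 1 + v_{k-1}(s_2)$, so the integrand in the third term is pointwise at least $1$. Admissibility then gives $F(s_2) - F(s_1) = \int_{s_1}^{s_2} f(x)\, dx > 0$, so the third term is at least $F(s_2) - F(s_1) > 0$ and the strict inequality follows.

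The main technical obstacle is the algebraic bookkeeping that produces the three-term identity above: one must keep the integrands and the domains of integration aligned so that each leftover piece is visibly non-negative, and in particular one has to notice that writing the third summand with $M(s_1,\cdot)$ rather than $M(s_2,\cdot)$ is what supplies the ``$+1$'' needed for the strict bound. Once that decomposition is in hand, the non-increasing claim is immediate and the strict claim reduces to a single pointwise estimate on the integrand in the third term combined with admissibility of $F$; note that strict monotonicity of $v_{k-1}$ is not required for the inductive step, so non-strict monotonicity alone suffices as the induction hypothesis.
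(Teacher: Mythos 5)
Your proof is correct, and while it shares the same broad skeleton as the paper's (induction on $k$ driven by the Bellman recursion), the decomposition and, more interestingly, the source of strictness are genuinely different. The paper bounds $v_k(s+\epsilon) - v_k(s)$ from above by first replacing $v_{k-1}(s+\epsilon)$ with $v_{k-1}(s)$ inside the $\max$ and then absorbing the short interval $[s,s+\epsilon]$ with a crude estimate, landing on the bound $F(s+\epsilon)\{v_{k-1}(s+\epsilon)-v_{k-1}(s)\}$; strictness is then \emph{inherited} from the strict induction hypothesis together with $F(s+\epsilon)>0$. You instead produce an exact three-term identity in which every term is non-negative, and the strict decrease is \emph{generated} fresh in each step by the third term: for $x\in(s_1,s_2)$ the integrand $M(s_1,x)-v_{k-1}(s_2)\ge 1+v_{k-1}(x)-v_{k-1}(s_2)\ge 1$, so admissibility alone gives the term mass at least $F(s_2)-F(s_1)>0$. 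As you note, this means the induction hypothesis can be kept in its weak (non-strict) form throughout, which is a small but real structural gain over the paper's argument and also makes the base case uniform ($v_0\equiv 0$ works directly, with no special handling of $v_1$). Both routes are sound; yours isolates cleanly \emph{where} the strict inequality comes from, which the paper's chain of upper bounds somewhat obscures.
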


\begin{proof}
The first assertion is trivial, so we focus on the second.
To organize our induction we denote by $\HH_k$ the assertion
\begin{equation*}
v_k( s + \epsilon) < v_k (s) \quad \text{for all $s \in \II$ and all $\epsilon >0$.}
\end{equation*}
When $k=1$, we have $v_1(s) = 1 - F(s)$, and admissibility
of $F$ implies $v_1$ is strictly decreasing on $\II$.
This establishes the base case  $\HH_{1}$.

For $k>1$ we assume that $\HH_{k-1}$ holds, and we note by the
Bellman recursion \eqref{eq:Bellman-general-f} and the characterizing properties of admissible distributions that
\begin{align*}
v_k( s + \epsilon) - v_k (s)
 = {} &  F(s+\epsilon) v_{k-1}( s + \epsilon) + \! \int_{s+\epsilon}^\infty \!\!\!\!\! \max\{  v_{k-1}( s + \epsilon), 1 + v_{k-1}( x ) \} f(x) \, dx \\
\notag
 & - F(s) v_{k-1}( s ) - \int_{s}^\infty \max\{ v_{k-1}( s ), 1 + v_{k-1}( x ) \} f(x) \, dx\\
\notag
\leq {} & F(s+\epsilon) v_{k-1}( s + \epsilon)  +  \int_{s+\epsilon}^\infty \!\!\!\! \max\{  v_{k-1}( s ), 1 + v_{k-1}( x ) \} f(x) \, dx \\
\notag
 &  - F(s + \epsilon) v_{k-1}( s ) - \int_{s +\epsilon}^\infty \max\{ v_{k-1}( s ), 1 + v_{k-1}( x ) \} f(x)\, dx\\
\notag
 = {} & F(s+\epsilon)\left\{ v_{k-1}( s + \epsilon)  - v_{k-1}( s ) \right\},
\end{align*}
where we first used
$v_{k-1}(s + \epsilon) <v_{k-1}(s)$
and then used the trivial estimate
$$ \{F(s + \epsilon) - F(s) \}  v_{k-1}( s ) \leq \int_s^{s+\epsilon} \max\{ v_{k-1}( s ), 1 + v_{k-1}( x ) \} f(x)\, dx.$$
For $s \in \II$ one has strict positivity of  $F(s+\epsilon)$,
so by the induction hypothesis  $\HH_{k-1}$ we have
$v_k( s + \epsilon) - v_k (s) \leq F(s+\epsilon)\left\{ v_{k-1}( s + \epsilon)  - v_{k-1}( s ) \right\}<0$.
\end{proof}

\subsection*{\sc Optimal Threshold Functions}\subsectionnewline

The monotonicity of $v_{k-1}$ tells us that
the integrand in \eqref{eq:Bellman-general-f} equals
the right maximand $\{1 + v_{k-1}(x)\}$ on a certain initial segment of $[s, \infty)$, and
it equals the left maximand $v_{k-1}(s)$ on the rest of the segment. This observation leads to
a useful reformulation of the Bellman recursion; specifically, if we set
\begin{equation}\label{eq:optimal-threshold-general-f}
h_{k} (s) = \sup \{ x \in [s , \infty) : F(x) < 1 \text{ and } v_{k-1}(s)\leq 1 + v_{k-1}(x) \},
\end{equation}
then the Bellman recursion \eqref{eq:Bellman-general-f} can be written as
\begin{equation}\label{eq:Bellman-general-f-h-explicit}
v_k(s) = \{ 1- F( h_{k} (s) ) + F(s) \} v_{k-1}(s) + \int_s^{h_{k} (s)} \{ 1 + v_{k-1}(x) \} \, dF(x).
\end{equation}
The functions $\{ h_k: 1 \leq k < \infty\}$ defined by \eqref{eq:optimal-threshold-general-f}
are called the \emph{optimal threshold functions}.

If $v_{k-1}(s) \leq 1$, the characterization \eqref{eq:optimal-threshold-general-f}
has an informative policy interpretation.
Namely, if $v_{k-1}(s) \leq 1,$ then the optimal strategy for the decision
maker is the greedy strategy where one accepts any arriving observation that is as large as $s$.
On the other hand, if $ v_{k-1} (s) > 1$,
the optimal decision maker needs to act more conservatively; when $k$ observations remain to be seen,
one only accepts the newly arriving
observation if it falls in the interval $[s, h_{k}(s)]$.

When $F$ is admissible, we have the strict monotonicity of $v_{k-1}$, and this
allows a second  characterization of the threshold function:
\begin{equation} \label{eq:hk-optimally-conservative}
h_k ( s )  \text{ uniquely satisfies } v_{k-1}(s) = 1 + v_{k-1} ( h_k ( s ) ) \quad \quad \text{ if } v_{k-1} (s) > 1.
\end{equation}
The value  $h_{k}(s)$ of the threshold function thus marks
the point of indifference between the optimal acceptance region and the optimal rejection region.
The characterization \eqref{eq:hk-optimally-conservative} also motivates a definition.
\begin{definition}[Critical Value]\label{def:criticalvalue}
If $F$ is admissible, then the unique solution of the equation
$
v_k(s)=1
$
is called the \emph{critical value}, and it is denoted by $s_k^*$.
\end{definition}

The analytical character of  $h_k$ changes at $s_k^*$,
and one has to be attentive to the  differing  behavior
of $h_k$ above and below $s_k^*$.
We will not need this distinction until Section \ref{se:smmoothness-of-value-functions}, but
it is critical there.

We complete this section by recording  two simple (but useful) bounds on the time-difference
of the value function. These bounds follow from the characterization \eqref{eq:optimal-threshold-general-f}
for the optimal threshold $h_k$ and the monotonicity of the
value function $v_{k-1}$.

\begin{lemma}\label{lm:Bellman-bounds}
For $s \geq 0$ and $1\leq k < \infty $, we have the inequalities
\begin{equation}\label{eq:simple-difference-bounds}
0 \leq v_{k}(s)-v_{k-1}(s)\leq  F( h_k(s) ) - F(s) \leq 1.
\end{equation}
\end{lemma}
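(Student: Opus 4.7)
My plan is to derive all three inequalities from a single identity obtained from the reformulated Bellman recursion \eqref{eq:Bellman-general-f-h-explicit}.

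First, I would subtract $v_{k-1}(s)$ from both sides of \eqref{eq:Bellman-general-f-h-explicit}. After grouping the coefficient of $v_{k-1}(s)$, namely $1 - F(h_k(s)) + F(s) - 1 = -(F(h_k(s)) - F(s))$, and rewriting this as $-\int_s^{h_k(s)} dF(x)$, the identity collapses to
\begin{equation*}
v_k(s) - v_{k-1}(s) = \int_s^{h_k(s)} \bigl\{ 1 + v_{k-1}(x) - v_{k-1}(s) \bigr\} \, dF(x).
\end{equation*}
This single representation is the engine of the proof, and the two desired inequalities follow from bounding the integrand from below and above respectively.

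For the lower bound $v_k(s) - v_{k-1}(s) \geq 0$, I would invoke the defining property \eqref{eq:optimal-threshold-general-f} of $h_k$: for every $x$ in the integration range $[s, h_k(s))$ one has $v_{k-1}(s) \leq 1 + v_{k-1}(x)$, so the integrand is non-negative. For the upper bound $v_k(s) - v_{k-1}(s) \leq F(h_k(s)) - F(s)$, I would apply Lemma \ref{lm:value-function-decreasing-general-f}: since $x \geq s$ and $v_{k-1}$ is non-increasing, $v_{k-1}(x) - v_{k-1}(s) \leq 0$, so the integrand is at most $1$. Integrating the constant $1$ against $dF$ on $[s, h_k(s)]$ yields $F(h_k(s)) - F(s)$. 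The final inequality $F(h_k(s)) - F(s) \leq 1$ is trivial because $F$ takes values in $[0,1]$.

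I do not foresee a genuine obstacle here: the work is essentially bookkeeping once the integral representation of $v_k(s) - v_{k-1}(s)$ is in hand. The only minor subtlety is whether the weak inequality defining $h_k(s)$ in \eqref{eq:optimal-threshold-general-f} produces any issue at the endpoint $x = h_k(s)$, but since the inequality is integrated against $dF$ this single point is irrelevant, and the argument goes through without assuming admissibility of $F$.
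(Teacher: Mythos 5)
Your proposal is correct and matches the paper's argument essentially line for line: subtract $v_{k-1}(s)$ from the reformulated Bellman recursion to obtain the single integral identity, then bound the integrand below by $0$ via the defining property of $h_k$ and above by $1$ via monotonicity of $v_{k-1}$.
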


From a modeler's perspective, this inequality is intuitive since $F(h_k( s )) - F(s)$
can be interpreted as the probability that one selects the next observation when
$k$ observations remain to be seen. A formal confirmation of \eqref{eq:simple-difference-bounds}
illustrates the handiness of the
second form \eqref{eq:Bellman-general-f-h-explicit} of the Bellman equation.

\begin{proof}[Proof of Lemma \ref{lm:Bellman-bounds}]
First, note that after subtracting $v_{k-1}(s)$ from both sides of equation \eqref{eq:Bellman-general-f-h-explicit},
we have
\begin{equation*}
    v_k(s)-v_{k-1}(s) = \int^{h_k(s)}_{s}  \{ 1+v_{k-1}(x)-v_{k-1}(s) \} \,dF(x).
\end{equation*}
The map $x \mapsto v_{k-1}(x)$ is monotone decreasing,
so the factor $\{ 1+v_{k-1}(x)-v_{k-1}(s) \}$ is bounded above by one.
This gives us our upper bound in \eqref{eq:simple-difference-bounds}.
The representation \eqref{eq:optimal-threshold-general-f} for $h_k$
tells us the integrand is non-negative on $[s,h_k(s)]$, and this gives the lower bound in \eqref{eq:simple-difference-bounds}.
\end{proof}

\subsection*{\sc Value Function Submodularity} \subsectionnewline

If one increases the number $k$ of observations yet to be seen, then the decision maker faces a richer set of
future possibilities. This in turn suggests that the decision maker may want to act more conservatively, keeping more
powder dry for future action.
Specifically, one might guess that $h_{k+1}(s)\leq h_k(s)$
for all $s \in [0, \infty)$ and all $1 \leq k < \infty$.
We confirm this guess as a corollary of the next proposition which gives us a pivotally useful
property of the value functions.

\begin{proposition}[Submodularity of the Value Functions]\label{pr:value-functions-submodularity}
The sequence of value functions $\{v_k : 1 \leq k < \infty\}$
determined by the Bellman recursion \eqref{eq:Bellman-general-f}
is \emph{submodular} in the sense that
for all $1 \leq k < \infty$ one has
\begin{equation}\label{eq:submodularity}
    v_{k-1} (s) - v_{k-1}(t) \leq v_{k} ( s ) - v_{k} ( t ) \quad \quad \text{for all } 0 \leq s \leq t < \infty.
\end{equation}
\end{proposition}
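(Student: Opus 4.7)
The plan is induction on $k$. The base case $k=1$ reduces to $0\le F(t)-F(s)$, which is immediate from $v_0\equiv 0$ and $v_1(s)=1-F(s)$. For the inductive step I set $\Delta_j(s):=v_j(s)-v_{j-1}(s)\ge 0$ and observe that \eqref{eq:submodularity} at level $k$ is equivalent to $s\mapsto \Delta_k(s)$ being non-increasing; assuming this at level $k-1$, I will derive it at level $k$.

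Subtracting the Bellman recursion \eqref{eq:Bellman-general-f} at levels $k$ and $k-1$ yields
\begin{equation*}
\Delta_k(s) = F(s)\Delta_{k-1}(s) + \int_s^\infty \bigl[M_{k-1}(s,x) - M_{k-2}(s,x)\bigr]\, dF(x),
\end{equation*}
with $M_j(s,x):=\max\{v_j(s),\,1+v_j(x)\}$. The core of the argument is a case analysis of the integrand organized by the two thresholds $h_k(s)$ and $h_{k-1}(s)$. From the inductive hypothesis I first verify the ordering $h_k(s)\le h_{k-1}(s)$: writing $v_{k-2}=v_{k-1}-\Delta_{k-1}$ gives $v_{k-2}(h_k(s))=v_{k-2}(s)-1+[\Delta_{k-1}(s)-\Delta_{k-1}(h_k(s))]\ge v_{k-2}(s)-1=v_{k-2}(h_{k-1}(s))$, and monotonicity of $v_{k-2}$ then forces $h_k(s)\le h_{k-1}(s)$. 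This ordering pins down $M_{k-1}(s,x)-M_{k-2}(s,x)$ on each of $[s,h_k(s))$, $[h_k(s),h_{k-1}(s))$, and $[h_{k-1}(s),\infty)$, and reassembling the integral gives the closed form
\begin{align*}
\Delta_k(s) = \bigl[F(s)+1-F(h_k(s))\bigr]\Delta_{k-1}(s)
&+ \int_s^{h_k(s)} \Delta_{k-1}(x)\, dF(x) \\
&+ \int_{h_k(s)}^{h_{k-1}(s)} \bigl[v_{k-2}(s)-1-v_{k-2}(x)\bigr]\, dF(x).
\end{align*}

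To finish, I show $\Delta_k$ is non-increasing. Under admissibility of $F$, differentiating in $s$ and using the envelope identities $v_{k-1}(h_k(s))=v_{k-1}(s)-1$ and $v_{k-2}(h_{k-1}(s))=v_{k-2}(s)-1$ to cancel all boundary contributions leaves
\begin{equation*}
\Delta_k'(s) = \bigl[F(s)+1-F(h_k(s))\bigr]\Delta_{k-1}'(s) + v_{k-2}'(s)\bigl[F(h_{k-1}(s))-F(h_k(s))\bigr],
\end{equation*}
in which both summands are non-positive---the first by the inductive hypothesis together with the non-negativity of its coefficient, the second by the monotonicity of $v_{k-2}$ combined with $h_k(s)\le h_{k-1}(s)$. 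The general continuous case then follows either by approximation through admissible distributions or by a direct comparison $\Delta_k(s)-\Delta_k(t)\ge 0$ for $s\le t$ tracking the same three sub-intervals.

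The main obstacle will be the bookkeeping in the three-interval case analysis and the simplification to the clean form above, together with handling the degenerate regime where $v_{k-1}(s)\le 1$, so that $h_k(s)$ equals the right endpoint of the support of $F$ rather than being characterized by \eqref{eq:hk-optimally-conservative}.
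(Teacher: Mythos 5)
Your proposal is correct, and I checked the core computations: with $\Delta_k := v_k - v_{k-1}$, the threshold ordering $h_k(s)\le h_{k-1}(s)$ does follow from the inductive hypothesis (so there is no circular appeal to Corollary~\ref{cor:monotonicity-optimal-thresholds}), the three-interval case analysis assembles into the displayed closed form, and the boundary contributions from the variable limits of integration cancel to yield your identity for $\Delta_k'(s)$, both of whose summands are non-positive. But this is a genuinely different argument from the paper's. The paper derives a \emph{difference recursion} for the two-point quantity $v_k(s)-v_k(t)$, rewriting it so that the right-hand side is built entirely from $\max$ and $\min$ applied to differences $v_{k-1}(\cdot)-v_{k-1}(\cdot)$; since $\max$ and $\min$ are monotone operators, the induction step is a one-line substitution of those differences by the corresponding (larger) $v_k$-differences, and the threshold monotonicity falls out afterward as a corollary. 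That route needs no differentiability, no admissibility, and no case analysis around $h_k$ or $h_{k-1}$, and it works uniformly for every continuous $F$. Your route trades that cleanliness for an explicit derivative formula at the cost of (i) invoking the smoothness results of Section~\ref{se:smmoothness-of-value-functions} (logically permissible, since Proposition~\ref{prop:vk-differentiable-w-continuous-derivative} does not depend on Proposition~\ref{pr:value-functions-submodularity}, but it inverts the paper's order of development), (ii) handling the critical values $s_{k-1}^*$ and $s_{k-2}^*$ where $h_k$ and $h_{k-1}$ fail to be differentiable (manageable, since the $h'$ contributions cancel and $v_k'$ is continuous, so the formula extends from a dense set), and (iii) a passage from admissible to general continuous $F$. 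For (iii), the quickest route is the dictionary of Section~\ref{se:Variance-Bounds-in-General}: prove submodularity for the uniform model and transfer via $v_k(s)=\vu_k(F(s))$ with $F$ non-decreasing; the ``direct comparison'' alternative you mention would essentially reproduce the paper's own argument.
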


\begin{proof}
We first derive a recursion for the difference $v_{k} ( s ) - v_{k}( t )$.
For $0 \leq s \leq t < \infty$, we have from \eqref{eq:Bellman-general-f} that
\begin{align}\label{eq:max-integral}
v_{k} ( s ) & \!-\! v_{k}( t ) \!=\! F(s) \{ v_{k-1}( s ) \!-\! v_{k-1}( t ) \} \\
& \!+\! \int_s^t \max \{ v_{k-1} ( s ) \!-\! v_{k-1} ( t ) , 1 \!+\! v_{k-1} ( x ) \!-\! v_{k-1} ( t ) \}  \, dF(x) \nonumber \\
& \!+\! \int_t^\infty [ \max \{ v_{k-1} ( s ) , 1 \!+\! v_{k-1} ( x ) \} \!-\! \max \{ v_{k-1} ( t ) , 1 \!+\! v_{k-1}( x ) \} ]  \, dF(x). \nonumber
\end{align}
Next, we let
\begin{align*}
  a_{k-1}(x) &\equalbydef \min \{ v_{k-1}(s) - v_{k-1}(t), v_{k-1}(s) - v_{k-1}(x) - 1 \}, \\
  b_{k-1}(x) &\equalbydef \min \{ 1 + v_{k-1}(x) - v_{k-1}(t) , 0 \},
\end{align*}
and we note that the difference
$$ \max \{ v_{k-1} ( s ) , 1 + v_{k-1} ( x ) \} - \max \{ v_{k-1} ( t ) , 1 + v_{k-1}( x ) \}$$
which appears in the last integrand of \eqref{eq:max-integral}
can be written as
$$
\max \{ v_{k-1} ( s ) , 1 + v_{k-1} ( x ) \} - \max \{ v_{k-1} ( t ) , 1 + v_{k-1}( x ) \} = \max \{a_{k-1}(x), b_{k-1}(x) \}.
$$
Here $s \leq t$, so when $b_{k-1}(x) < 0$
the monotonicity of the value functions in Lemma \ref{lm:value-function-decreasing-general-f}
implies that $ 0 \leq v_{k-1} (s) - v_{k-1} (x) - 1 $.
It then follows that $ 0 \leq a_{k-1}(x)$ and
$\max \{a_{k-1}(x), b_{k-1}(x) \} = \max \{a_{k-1}(x), 0 \}$.
In general, we then have the equivalence
$$
\max \{ v_{k-1} ( s ) , 1 + v_{k-1} ( x ) \} - \max \{ v_{k-1} ( t ) , 1 + v_{k-1}( x ) \} = \max \{a_{k-1}(x), 0 \},
$$
and we can substitute this representation and the explicit expression for $a_{k-1}(x)$
in \eqref{eq:max-integral} to obtain the simplified \emph{difference recursion}
\begin{align}\label{eq:difference-bellman}
v_{k} ( s ) \!-\! v_{k}( t ) &= F(s) \{ v_{k-1}( s ) \!-\! v_{k-1}( t ) \} \\
+  \int_s^t & \max \{ v_{k-1} ( s ) \!-\! v_{k-1} ( t ) , 1 \!+\! v_{k-1} ( x ) \!-\! v_{k-1} ( t ) \} \, dF(x) \nonumber \\
+ \int_t^\infty & \max \{ \min \{ v_{k-1} (s) - v_{k-1} (t), v_{k-1} (s) - v_{k-1} (x) - 1 \} , 0 \} \, dF(x). \nonumber
\end{align}

We now let $\HH_k$ be the assertion that
\begin{equation*}
v_{ k - 1 } ( s ) - v_{ k - 1 }( t ) \le v_{ k }( s ) - v_{ k }( t )
\quad \quad \text{ for all } 0 \leq s \leq t < \infty,
\end{equation*}
and we prove by induction that $\HH_k$ holds for all $k\geq 1$. We first note that
for $k = 1$ we have $ v_0 ( s ) = 0 $ for all $s \in [0, \infty)$.
By the difference recursion \eqref{eq:difference-bellman} we obtain
$v_1 ( s ) - v_1 ( t ) =  F(t) - F(s)\geq 0 = v_0 ( s ) - v_0 ( t ) $, so the base case $\HH_1$ holds.

Next, we suppose that $\HH_{k-1}$ holds, and we
apply $\HH_{k-1}$ to \emph{all} of the terms on the right-hand side of \eqref{eq:difference-bellman}.
We then obtain that
\begin{align}\label{eq:difference-bellman}
v_{k} ( s ) \!-\! v_{k}( t ) \leq {} &   F(s) \{ v_{k}( s ) \!-\! v_{k}( t ) \} \nonumber\\
 & + \int_s^t  \max \{ v_{k} ( s ) \!-\! v_{k} ( t ) , 1 \!+\! v_{k} ( x ) \!-\! v_{k} ( t ) \} \, dF(x) \nonumber \\
 & + \int_t^\infty  \max \{ \min \{ v_{k} (s) - v_{k} (t), v_{k} (s) - v_{k} (x) - 1 \} , 0 \} \, dF(x). \nonumber
\end{align}

We can now apply the difference recursion \eqref{eq:difference-bellman} a second time after we replace $k$ by $k+1$.
This tells us that the right-hand side above is equal to the difference $v_{ k + 1 } ( s ) - v_{ k + 1}( t )$,
thus completing the proof of $\HH_k$ and of the proposition.
\end{proof}

The submodularity guaranteed by  Proposition \ref{pr:value-functions-submodularity} is more powerful than one might expect.
In particular, it delivers three basic corollaries.

\begin{corollary}[Monotonicity of Optimal Thresholds]\label{cor:monotonicity-optimal-thresholds}
For the threshold functions characterized by \eqref{eq:optimal-threshold-general-f} we have
\begin{equation}\label{eq:optimal-threshold-monotonicity}
h_{k+1} (s) \leq h_k(s) \quad \quad \text{ for }  0 \leq s < \infty.
\end{equation}
\end{corollary}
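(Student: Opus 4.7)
The plan is to show that the defining set in \eqref{eq:optimal-threshold-general-f} for $h_{k+1}(s)$ is contained in the defining set for $h_k(s)$; once this is established the inequality $h_{k+1}(s) \leq h_k(s)$ follows immediately from the monotonicity of suprema. The whole argument then reduces to a one-line application of submodularity, exactly as Proposition \ref{pr:value-functions-submodularity} was set up to deliver.

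To carry this out, I would fix $s \geq 0$ and take any $x \in [s, \infty)$ that lies in the set defining $h_{k+1}(s)$, i.e., any $x$ satisfying $F(x) < 1$ and $v_k(s) \leq 1 + v_k(x)$. My aim is to verify that this same $x$ lies in the analogous set defining $h_k(s)$, namely that $v_{k-1}(s) \leq 1 + v_{k-1}(x)$.

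Since $s \leq x$, submodularity as stated in \eqref{eq:submodularity} gives
\begin{equation*}
v_{k-1}(s) - v_{k-1}(x) \leq v_k(s) - v_k(x) \leq 1,
\end{equation*}
where the second inequality is exactly the membership condition that $x$ satisfies for $h_{k+1}(s)$. Rearranging the outer inequality yields $v_{k-1}(s) \leq 1 + v_{k-1}(x)$, which together with $F(x) < 1$ places $x$ in the set defining $h_k(s)$. Taking the supremum over $x$ then gives $h_{k+1}(s) \leq h_k(s)$, as desired.

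I do not anticipate any real obstacle. The boundary cases are benign: if $v_k(s) \leq 1$ the defining set for $h_{k+1}(s)$ is just $\{x \geq s : F(x) < 1\}$, but Lemma \ref{lm:Bellman-bounds} gives $v_{k-1}(s) \leq v_k(s) \leq 1$, so the same set defines $h_k(s)$ and equality holds; if $v_k(s) > 1$ but $v_{k-1}(s) \leq 1$, then $h_k(s)$ is the supremum of $\{x : F(x) < 1\}$ and the inequality is trivial. The only substantive step is the invocation of submodularity, which is precisely the reason Proposition \ref{pr:value-functions-submodularity} was proved in advance of this corollary.
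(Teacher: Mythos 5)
Your argument is correct and is essentially the paper's own proof: you show the defining set for $h_{k+1}(s)$ is contained in that for $h_k(s)$ via the submodularity inequality $v_{k-1}(s)-v_{k-1}(x) \le v_k(s)-v_k(x)$, and then take suprema, which is exactly the chain of inclusions the paper writes out in one display. The extra discussion of boundary cases is harmless but unnecessary, since the set inclusion already handles them uniformly.
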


\begin{proof}
Here we only have to note that
\begin{align*}
h_{ k + 1 } (s) & = \sup \{ x \in [s, \infty)  : F(x) < 1 \text{ and } v_k (s) - v_k (x) \le 1 \}\\
& \le \sup \{ x \in [s, \infty)  : F(x) < 1 \text{ and } v_{ k - 1 } (s) - v_{ k - 1 } (x) \le 1 \}\\
& = h_{ k } (s),
\end{align*}
where the one inequality comes directly from the submodularity \eqref{eq:submodularity}
and the two equalities
come from \eqref{eq:optimal-threshold-general-f}.
\end{proof}

\begin{corollary}[Concavity in $k$ of the Value Functions.]\label{cor:value-function-concave-n}
The value functions are concave as functions of $k$; that is, for each  $s \in [0,\infty)$  and all  $k \geq 1$, one has
\begin{equation*}
v_{k+1}(s) - 2 v_{k}(s) + v_{k-1}(s) \leq 0.
\end{equation*}
\end{corollary}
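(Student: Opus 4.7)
The plan is to reduce the desired concavity inequality to showing that the forward differences $v_k(s) - v_{k-1}(s)$ are non-increasing in $k$, and then to control these differences by combining Proposition \ref{pr:value-functions-submodularity} (submodularity) with Corollary \ref{cor:monotonicity-optimal-thresholds} (monotonicity of optimal thresholds).

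First I would rewrite the target $v_{k+1}(s) - 2 v_k(s) + v_{k-1}(s) \le 0$ as
$$ v_{k+1}(s) - v_k(s) \; \le \; v_k(s) - v_{k-1}(s). $$
The identity obtained inside the proof of Lemma \ref{lm:Bellman-bounds},
$$ v_k(s) - v_{k-1}(s) = \int_s^{h_k(s)} \{ 1 + v_{k-1}(x) - v_{k-1}(s) \} \, dF(x), $$
together with its level-$(k+1)$ analogue, converts the comparison into one between two explicit integrals.

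From here, two inputs do the work. Submodularity applied with $0 \le s \le x$ rearranges to $v_k(x) - v_k(s) \le v_{k-1}(x) - v_{k-1}(s)$, so the integrand $1 + v_k(x) - v_k(s)$ at level $k+1$ is bounded pointwise by the integrand $1 + v_{k-1}(x) - v_{k-1}(s)$ at level $k$. Corollary \ref{cor:monotonicity-optimal-thresholds} then gives $h_{k+1}(s) \le h_k(s)$, and on the larger interval $[s, h_k(s)]$ the integrand $1 + v_{k-1}(x) - v_{k-1}(s)$ is non-negative by the defining characterization \eqref{eq:optimal-threshold-general-f} of $h_k$. Hence enlarging the range of integration from $[s, h_{k+1}(s)]$ to $[s, h_k(s)]$ can only increase the integral, and chaining these two estimates yields
$$ v_{k+1}(s) - v_k(s) \; \le \; \int_s^{h_k(s)} \{ 1 + v_{k-1}(x) - v_{k-1}(s) \} \, dF(x) = v_k(s) - v_{k-1}(s), $$
which is exactly what we need.

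I do not foresee any genuine obstacle. The only point that requires attention is a small bookkeeping matter: one must apply submodularity in the direction that shrinks the integrand and threshold monotonicity in the direction that shrinks the domain of integration, so that both effects push the comparison the same way; the defining property of $h_k$ guarantees that the extra piece of the larger domain contributes a non-negative quantity, which is what makes the domain enlargement harmless. Once the representation from Lemma \ref{lm:Bellman-bounds} is in hand, the two prior corollaries slot in directly.
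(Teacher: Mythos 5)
Your proof is correct and takes essentially the same approach as the paper: the paper writes the exact difference identity for $v_{k+1}(s) - v_k(s) - (v_k(s) - v_{k-1}(s))$ as a sum of two integrals and checks each integrand is non-positive using submodularity and the threshold characterization, while you obtain the same conclusion as a two-step chain of inequalities using the same three ingredients (the Lemma~\ref{lm:Bellman-bounds} representation, Proposition~\ref{pr:value-functions-submodularity}, and Corollary~\ref{cor:monotonicity-optimal-thresholds}).
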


\begin{proof}
By the monotonicity \eqref{eq:optimal-threshold-monotonicity}
of the optimal threshold functions, the recursion \eqref{eq:Bellman-general-f-h-explicit}
gives us the difference identity
\begin{align*}
v_{k+1}(s) - v_{k}(s)
 = {} & v_{k}(s) - v_{k-1}(s) \\
& + \int_s^{h_{k+1}(s)} \{ v_{k-1}(s) -  v_{k-1}(x) - v_{k}(s) + v_{k}(x)  \} \, dF(x)\\
& + \int_{h_{k+1} (s)}^{h_k (s)} \{  v_{k-1}(s) - 1 - v_{k-1}(x) \} \, dF(x),
\end{align*}
and it suffices to check that the two integrands on the right-hand side are non-positive.
Non-positivity of the first integrand follows from the submodularity \eqref{eq:submodularity},
and non-positivity of the second integrand follows from the characterization of $h_k(s)$ in \eqref{eq:optimal-threshold-general-f}.
\end{proof}

\begin{corollary}[Concavity in $n$ of the Expected Length]\label{cor:mean-concave-n}
For any continuous $F$, the map $n \mapsto \E[L_n(\pi^*_n)]$ is concave in $n$.
\end{corollary}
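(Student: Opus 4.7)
The plan is to observe that this corollary is essentially an immediate consequence of the previously established Corollary \ref{cor:value-function-concave-n} specialized to the initial state $s=0$. Recall that the optimal expected selection count admits the representation
\begin{equation*}
\E[L_n(\pi_n^*)] = v_n(0),
\end{equation*}
so concavity of $n \mapsto \E[L_n(\pi_n^*)]$ is exactly the statement that the sequence $\{v_n(0)\}_{n \ge 0}$ has non-positive second differences.

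First I would simply apply Corollary \ref{cor:value-function-concave-n} at the point $s=0$: for every $n \ge 1$ one has
\begin{equation*}
v_{n+1}(0) - 2 v_n(0) + v_{n-1}(0) \le 0,
\end{equation*}
which translates directly into
\begin{equation*}
\E[L_{n+1}(\pi_{n+1}^*)] - 2\,\E[L_n(\pi_n^*)] + \E[L_{n-1}(\pi_{n-1}^*)] \le 0.
\end{equation*}
This is the discrete concavity assertion of the corollary, and no further argument is needed.

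There is essentially no obstacle here; the corollary is a labeled restatement of the $s=0$ slice of the prior pointwise concavity result, offered separately because it is the form that will be invoked in Section \ref{se:Mean-DePoissonization} for the de-Poissonization step that yields the lower bound in \eqref{eq:mean-bounds-main-theorem}. The only thing worth noting is that continuity of $F$ plays no role beyond what was already used to derive the Bellman recursion \eqref{eq:Bellman-general-f} and the submodularity in Proposition \ref{pr:value-functions-submodularity}; the hypothesis is simply carried along from the prior results.
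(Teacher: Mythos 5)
Your proof is correct and matches the paper exactly: the authors also note that this corollary is just the special case $s=0$, $k=n$ of Corollary \ref{cor:value-function-concave-n}, combined with the identity $\E[L_n(\pi_n^*)] = v_n(0)$. Nothing further is required.
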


This is just a special case of Corollary \ref{cor:value-function-concave-n}
(where one just takes $s=0$ and $k = n$), but,
as we will see in Section \ref{se:Mean-DePoissonization}, this concavity
carries noteworthy force.

\begin{remark}[Further Context: an Offline Open Problem]\label{rem:concavity-n-offline}
It is not known if the corresponding concavity holds for the \emph{offline} monotone subsequence problem.
That is, we do not know if the map $n \mapsto \E[L_n]$ is concave where $L_n$ is defined by \eqref{eq:LnOffLineDef}.
In this case we do know $\E [L_n] = 2 n^{1/2} - \alpha n^{1/6} + o( n^{1/6})$ so concavity does seem like a
highly plausible conjecture.
\end{remark}

\section{\sc Intermezzo: Possibilities for De-Poissonization}\label{se:Mean-DePoissonization}

If $N$ is an integer valued random variable,
then one can consider the problem of sequential selection of a monotone increasing subsequence
from the random length sequence $\S=\{X_1, X_2, \ldots, X_N\}$.
Here, as usual, the elements of the sequence are independent
with a common continuous distribution $F$, and they are also independent of $N$.
We also assume that the decision maker knows $F$ and the distribution of $N$,
but the decision maker does not know the value of $N$ until the
sequence $\S$ has been exhausted. We let $L_{N} (\pi)$ denote the number
of selections that are made when one follows a
policy $\pi$ for sequential selection from $\S$.

\begin{proposition}[Information Lower Bound]\label{prop:info-lower-bound}
If $\E[N]=n$ for some $n \in \N$, then
\begin{equation}\label{eq:info-lower-bound}
\E [L_{N} (\pi)] \leq \E[L_n(\pi_n^*)].
\end{equation}
\end{proposition}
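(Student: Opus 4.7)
The plan is to condition on the realized value of $N$ and combine two ingredients: an ``information'' step that bounds the conditional expected reward by the optimal value with a known horizon, and Jensen's inequality applied to the concave map $k \mapsto \E[L_k(\pi_k^*)]$.

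First I would observe that, because $N$ is independent of $\{X_i\}$ and because the decisions prescribed by $\pi$ at time $i$ are $\F_i$-measurable (and in particular do not depend on the realized value of $N$), conditioning on $\{N = k\}$ leaves the joint law of $(X_1, \ldots, X_k)$ iid with distribution $F$, and turns $\pi$ into a feasible policy in $\Pi(k)$ when one stops after the $k$th observation. The optimality property \eqref{eq:uniquepolicy} of $\pi_k^*$ then yields the pointwise bound
\[
\E[L_N(\pi) \mid N = k] \le \E[L_k(\pi_k^*)] =: g(k),
\]
and taking expectations over $N$ gives $\E[L_N(\pi)] \le \E[g(N)]$.

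To conclude, I would invoke Corollary \ref{cor:mean-concave-n} (specialized at $s = 0$), which tells us that $g$ is concave on $\N$. Extending $g$ by piecewise linear interpolation to a concave function $\tilde g$ on $[0, \infty)$ -- an extension that is legitimate precisely because of the discrete midpoint concavity inequality -- and applying Jensen's inequality to the integer-valued random variable $N$ with $\E[N] = n \in \N$, one gets $\E[g(N)] = \E[\tilde g(N)] \le \tilde g(n) = g(n) = \E[L_n(\pi_n^*)]$, which is the desired bound.

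The step that requires the most care is verifying that $\pi$ really restricts to a policy in $\Pi(k)$ on the event $\{N = k\}$; the potential pitfall is if the formulation were to let $\pi$ exploit information about $N$ beyond its distribution. Under the stated assumption that the realized value of $N$ is only revealed after $\mathcal S$ is exhausted, this reduces to the adaptedness of $\pi$ to the filtration $\{\F_i\}$ together with the independence of $N$ from $(X_1, X_2, \ldots)$, both of which are explicit in the setup. Once this is in place, the argument is just concavity plus Jensen.
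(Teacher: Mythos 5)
Your argument is correct and follows the paper's proof essentially verbatim: condition on $N = j$ and use suboptimality of $\pi$ for the fixed-horizon problem, then apply Jensen's inequality to the piecewise-linear (hence concave, by Corollary~\ref{cor:mean-concave-n}) extension of $j \mapsto \E[L_j(\pi_j^*)]$. Your added remarks on why $\pi$ restricts to a feasible policy in $\Pi(k)$ on $\{N=k\}$ make explicit what the paper leaves implicit, but the route is the same.
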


\noindent\emph{Proof.} 
The policy $\pi$ is determined before the realization of $N$ is known, and,
for any given $j$, the policy $\pi$ is suboptimal when
it is used for sequential selection from the sequence $\{X_1,X_2, \ldots, X_j\}$.
Thus, if we condition on $N=j$, we then have
\begin{equation}\label{eq:suboptimalBD}
E[L_N(\pi) \, | \, N=j] \leq \E[L_j(\pi^*_j)].
\end{equation}
Now, if we take  $\phi:[0,\infty) \rightarrow [0,\infty)$ to be the piecewise linear extension of the map $j \mapsto \E[L_j(\pi^*_j)]$,
then by Corollary \ref{cor:mean-concave-n} we have that $\phi$ is also concave.
Finally, by the suboptimality \eqref{eq:suboptimalBD},  the definition of $\phi$,
and Jensen's inequality we obtain
\[
\pushQED{\qed}
\E[L_N(\pi)]  \leq \sum_{j=0}^\infty \E[L_j(\pi^*_j)] \P(N=j) = \E[ \phi (N)]  \leq \phi( \E [N] ) =\E[L_n(\pi_n^*)]. \qedhere
\popQED
\]

The next corollary establishes one of the five assertions of Theorem \ref{thm:OurCLT}. It is an immediate consequence of
Proposition \ref{prop:info-lower-bound} and the lower half of the mean bound \eqref{eq:BD-optimal-mean}
from \citeasnoun{BruDel:SPA2001}.

\begin{corollary}\label{cor:mean-lower-bound}
For any continuous $F$ we have as $n \rightarrow \infty$ that
\begin{equation}\label{eq:mean-bounds-main-theorem-deux}
(2n)^{1/2} - O(\log n) \leq E[L_n(\pi^*_n)].
\end{equation}
\end{corollary}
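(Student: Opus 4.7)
The strategy is a one-step de-Poissonization that directly chains Proposition \ref{prop:info-lower-bound} with the lower half of the Bruss-Delbaen mean estimate \eqref{eq:BD-optimal-mean}. Concretely, I would take $N = N_n$ to be a Poisson random variable with mean $n$ and let $\pi$ be the Poisson-horizon optimal policy $\pi^*_{N_n}$ of Bruss and Delbaen. This qualifies as a policy in the sense of Proposition \ref{prop:info-lower-bound} precisely because $\pi^*_{N_n}$ is constructed without advance knowledge of the realized value of $N_n$. Applying the proposition then yields the comparison
\[
\E[L_{N_n}(\pi^*_{N_n})] \leq \E[L_n(\pi^*_n)].
\]

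To close the argument, I would invoke the Bruss-Delbaen bound \eqref{eq:BD-optimal-mean} at $\nu = n$, which reads $\E[L_{N_n}(\pi^*_{N_n})] = (2n)^{1/2} + O(\log n)$. Extracting its lower half gives $(2n)^{1/2} - O(\log n) \leq \E[L_{N_n}(\pi^*_{N_n})]$, and chaining this with the inequality above produces \eqref{eq:mean-bounds-main-theorem-deux}.

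There is no real obstacle at this stage; the entire conceptual burden was already carried by Proposition \ref{prop:info-lower-bound}, whose proof combined the pathwise suboptimality of $\pi$ on the event $\{N=j\}$ with the concavity of the map $n \mapsto \E[L_n(\pi^*_n)]$ from Corollary \ref{cor:mean-concave-n}, via Jensen's inequality. The only genuine choice to make here is the reference distribution for $N$, and Poisson is the natural one because it is the setting in which \eqref{eq:BD-optimal-mean} is available; any other sufficiently concentrated choice would in principle work too but would force one to reprove a mean estimate for $\E[L_N(\pi)]$ from scratch.
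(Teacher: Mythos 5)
Your proof is correct and coincides with the paper's own argument: the paper establishes \eqref{eq:mean-bounds-main-theorem-deux} by combining Proposition \ref{prop:info-lower-bound} (with $N$ Poisson of mean $n$ and $\pi$ the Bruss--Delbaen Poisson-horizon optimal policy) with the lower half of \eqref{eq:BD-optimal-mean}, exactly as you describe. There is nothing to add.
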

This is a notable improvement
over the bound \eqref{eq:RheeTalagrandLowerBound} that had been established by several earlier
investigations; it improves a $O(n^{1/4})$ error bound all the way down to $O(\log n)$. For the central limit theorem \eqref{eq:TheCLT},
one could still get along with a lower bound as weak as  $(2n)^{1/2} - o(n^{1/4})$.

\subsection*{\sc De-Poissonization and Decision Problems}\subsectionnewline

We get the bound \eqref{eq:mean-bounds-main-theorem-deux} by a \emph{de-Poissonization argument} in the sense that
a ``fixed $n$" fact is extracted from a ``Poisson $N$" fact. Such arguments are common in computer science,
combinatorics and analysis; one finds many examples in
\citeasnoun{JacSzp:TCS1998},
\citeasnoun[Subsection VIII.5.3]{FlaSed:CUP2009}, and \citeasnoun[Chapter 6]{Kor:SPRI2004}.
Still, Proposition \ref{prop:info-lower-bound} is our only instance of a de-Poissonization argument,
and the proof of the proposition suggests in part why one may be hard-pressed to find more.

Decision problems are unlike the classical examples mentioned above. The Poisson $N$ problem and the fixed $n$ problem have different
optimal policies, and this mismatch forestalls the kind of direct analytical connection one has in the classical examples.
Conditioning on $N=j$ does engage the problem, but the suboptimality of the mismatched policy leads only to one-sided
relations such as \eqref{eq:info-lower-bound} and \eqref{eq:suboptimalBD}.

\section{Smoothness of the Value and Threshold Functions}\label{se:smmoothness-of-value-functions}

We need to show that the value functions
associated with an admissible distribution $F$ are continuously differentiable on $\II$.
As preliminary step, we consider the differentiability of the threshold functions
in a region determined by the critical values $s_k^*$ that were defined in Section \ref{se:DP-general-f}.

\begin{lemma}[Differentiability of the Threshold Functions]\label{lm:hk-differentiable}
Take $F$ to be admissible and take $k>1$. If $v_{k-1}$ is differentiable on $\II$ and
$s \in \II \cap [0, s_{k-1}^*) $, then
$h_k$ is differentiable at $s$, and one has
\begin{equation}\label{eq:hk-first-derivative}
h_k'(s) = \frac{v_{k-1}'(s)}{v_{k-1}'(h_k(s))} \geq 0.
\end{equation}
\end{lemma}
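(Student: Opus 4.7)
The plan is to invoke the implicit characterization \eqref{eq:hk-optimally-conservative}, which is available precisely because $s \in \II \cap [0,s_{k-1}^*)$ forces $v_{k-1}(s) > 1$: the threshold $h_k(s)$ is then uniquely determined by
$$v_{k-1}(h_k(s)) = v_{k-1}(s) - 1,$$
and the desired formula $h_k'(s) = v_{k-1}'(s)/v_{k-1}'(h_k(s))$ is exactly what formal implicit differentiation produces. Since the hypothesis only gives plain differentiability of $v_{k-1}$ (not $C^1$), I would avoid invoking a full implicit-function theorem and instead work the difference quotient by hand.

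First I would secure continuity of $h_k$ at $s$. Lemma \ref{lm:value-function-decreasing-general-f} shows that $v_{k-1}$ is continuous and strictly decreasing on $\II$, hence possesses a continuous inverse on $v_{k-1}(\II)$, and the representation
$h_k(s) = v_{k-1}^{-1}(v_{k-1}(s)-1)$
gives continuity of $h_k$ at $s$. Along the way one checks $h_k(s) \in \II$: the value $v_{k-1}(s)-1$ lies in $(0, v_{k-1}(s))$, which sits inside $v_{k-1}(\II)$. With continuity in hand, I would exploit the defining identity at $s$ and at $s+\epsilon$ (for $\epsilon$ small enough that $s+\epsilon \in \II \cap [0,s_{k-1}^*)$) to write
$$v_{k-1}(s+\epsilon) - v_{k-1}(s) = v_{k-1}(h_k(s+\epsilon)) - v_{k-1}(h_k(s)).$$
Strict monotonicity of $v_{k-1}$ guarantees $h_k(s+\epsilon) \neq h_k(s)$ whenever $\epsilon \neq 0$, so one may divide to obtain
$$\frac{h_k(s+\epsilon) - h_k(s)}{\epsilon} = \frac{\bigl[v_{k-1}(s+\epsilon)-v_{k-1}(s)\bigr]/\epsilon}{\bigl[v_{k-1}(h_k(s+\epsilon))-v_{k-1}(h_k(s))\bigr]/\bigl[h_k(s+\epsilon)-h_k(s)\bigr]}.$$
Letting $\epsilon \to 0$, the numerator tends to $v_{k-1}'(s)$ by assumption, while continuity of $h_k$ together with differentiability of $v_{k-1}$ at $h_k(s)$ drives the denominator to $v_{k-1}'(h_k(s))$. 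Provided the denominator is nonzero, the limit exists and equals the stated ratio. The sign assertion $h_k'(s) \geq 0$ is then automatic, because Lemma \ref{lm:value-function-decreasing-general-f} forces $v_{k-1}' \leq 0$ on $\II$, so the two derivatives share sign and their ratio is nonnegative.

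The main obstacle I anticipate is ruling out $v_{k-1}'(h_k(s)) = 0$, since strict monotonicity by itself does not preclude a flat derivative at an isolated point. I expect this non-degeneracy to be carried by the same induction that produces differentiability of $v_{k-1}$, by differentiating the recursion \eqref{eq:Bellman-general-f-h-explicit} and using admissibility ($f > 0$ on $\II$) to propagate strict negativity of $v_k'$ from the base case $v_1(s) = 1 - F(s)$, where $v_1'(s) = -f(s) < 0$. The present lemma should be read as implicitly relying on that accompanying fact, and in its proof I would invoke it whenever the denominator is evaluated.
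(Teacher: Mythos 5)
Your proposal is built around the same core idea as the paper's: differentiate the implicit relation $v_{k-1}(h_k(s)) = v_{k-1}(s)-1$ from \eqref{eq:hk-optimally-conservative}. The paper sets $Q(x,y)=-1+v_{k-1}(x)-v_{k-1}(y)$ and appeals to the implicit function theorem; you instead manipulate the difference quotient directly, first extracting continuity of $h_k$ via the inverse map $h_k(s)=v_{k-1}^{-1}(v_{k-1}(s)-1)$ and then passing to the limit. Your route is more elementary and slightly more robust: it does not require $v_{k-1}\in C^1$ (only pointwise differentiability, which is exactly the hypothesis stated), whereas a textbook implicit function theorem would ask for $C^1$. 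In the actual application inside Proposition \ref{prop:vk-differentiable-w-continuous-derivative}, the induction hypothesis does supply $C^1$, so the paper's invocation is harmless, but your proof matches the hypotheses of the lemma as written.

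You also put your finger on a point the paper glosses over. The paper asserts that Lemma \ref{lm:value-function-decreasing-general-f} implies $Q_y>0$, i.e.\ $v_{k-1}'(h_k(s))<0$; but strict monotonicity alone does not rule out a vanishing derivative at a point, so this step is not a consequence of that lemma. You correctly identify that the needed strict negativity $v_{k-1}'<0$ on $\II$ is propagated by the derivative recursion \eqref{eq:first-derivative-recursion-general-f}: from $v_1'(s)=-f(s)<0$ and $v_k'(s)=-f(s)+\{1-F(h_k(s))+F(s)\}v_{k-1}'(s)\le -f(s)<0$, the induction that establishes differentiability also carries $v_{k-1}'<0$ along. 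Strictly speaking this makes $v_{k-1}'<0$ an implicit standing hypothesis of the lemma rather than something that follows from what is stated, and both your proof and the paper's rely on it; the difference is that you name the dependence explicitly, which is the cleaner presentation. One small point worth making precise in your write-up is the claim that $(0,v_{k-1}(s))\subseteq v_{k-1}(\II)$, which is what puts $h_k(s)$ inside $\II$; this holds because $v_{k-1}$ is continuous and strictly decreasing on the open interval $\II$ with $\inf_{\II} v_{k-1}=0$ (e.g.\ from $0\le v_{k-1}(x)\le (k-1)(1-F(x))\to 0$ as $x$ approaches the right end of $\II$), but it deserves a sentence.
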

\begin{proof}
Set $Q(x,y) = - 1 + v_{k-1}(x) - v_{k-1}(y)$.
If $Q_y$ denotes the partial derivative of the function $Q$ with respect to $y$,
we know by our hypotheses that $Q_y$ exists,
and Lemma \ref{lm:value-function-decreasing-general-f} implies that the partial derivative $Q_y$ is strictly positive.
Now, if $(x_0,y_0)$ satisfies  $Q(x_0,y_0)=0$, then by the
implicit function theorem there is a neighborhood $\mathcal{N}_0$ of $x_0$ where one can solve
$Q(x,y)=0$ uniquely for $y$, and the solution $y$ is a differentiable function of $x$ for all $x \in \mathcal{N}_0$.
Moreover, if $x < s_{k-1}^*$ then \eqref{eq:hk-optimally-conservative} tells us $Q(x,y)=0$ if and only if $y=h_k(x)$,
so $h_k$ is differentiable as claimed. Given the differentiability of $h_k$ at $s$,
the formula \eqref{eq:hk-first-derivative} follows directly from
$v_{k-1}(x) =1 + v_{k-1}(h_k(x))$ by differentiation and the chain rule.
The non-negativity of $h_k'(s)$ then follows because the value function $v_{k-1}$
is strictly decreasing.
\end{proof}

\begin{proposition}[Continuous Differentiability of the Value Functions]\label{prop:vk-differentiable-w-continuous-derivative}
If $F$ is admissible,
then for each $1 \leq k < \infty$  the value function
$s \mapsto v_k(s)$ is continuously differentiable on $\II$, and we have
\begin{equation}\label{eq:first-derivative-recursion-general-f}
v_k'(s) = -f(s) + \{ 1 - F(h_k(s)) + F(s) \} \, v_{k-1}'(s) \quad \text{for } s \in \II.
\end{equation}
\end{proposition}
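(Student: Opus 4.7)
The plan is to establish the proposition by induction on $k$, differentiating the second form of the Bellman recursion \eqref{eq:Bellman-general-f-h-explicit} and exploiting the defining relation \eqref{eq:hk-optimally-conservative} for $h_k$ together with the differentiability of $h_k$ supplied by Lemma \ref{lm:hk-differentiable}. The base case $k=1$ is immediate: since $v_0\equiv 0$ the recursion collapses to $v_1(s) = 1 - F(s)$, whence $v_1'(s) = -f(s)$, which agrees with the claimed formula.

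For the inductive step, assume $v_{k-1}$ is continuously differentiable on $\II$ and partition $\II$ into the two subregions $\II \cap [0, s_{k-1}^*)$ and $\II \cap [s_{k-1}^*, \infty)$. On the first subregion, Lemma \ref{lm:hk-differentiable} yields the differentiability of $h_k$, so applying the product rule to the first term of \eqref{eq:Bellman-general-f-h-explicit} and the Leibniz integral rule to the second produces a candidate formula for $v_k'(s)$. All of the $h_k'(s)$ contributions bundle into a single factor $f(h_k(s))\, h_k'(s)\, \{1 + v_{k-1}(h_k(s)) - v_{k-1}(s)\}$, which vanishes identically thanks to the characterization \eqref{eq:hk-optimally-conservative}. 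The remaining terms collect to $-f(s) + \{1 - F(h_k(s)) + F(s)\}\, v_{k-1}'(s)$, as asserted.

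On the second subregion, $v_{k-1}(s) \le 1$ forces the maximum in the original Bellman recursion \eqref{eq:Bellman-general-f} to be attained by $1 + v_{k-1}(x)$ for every $x \ge s$; equivalently $h_k(s)$ is pinned to the right endpoint of the support and $F(h_k(s)) = 1$. The recursion then reduces to $v_k(s) = F(s)\, v_{k-1}(s) + \int_s^\infty \{1 + v_{k-1}(x)\}\, dF(x)$, and a direct differentiation yields $v_k'(s) = -f(s) + F(s)\, v_{k-1}'(s)$, which matches the claimed formula since $1 - F(h_k(s)) + F(s) = F(s)$ in this region.

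The main obstacle will be the continuity of $v_k'$ at the seam $s_{k-1}^*$, where the analytic structure of $h_k$ changes. As $s \to (s_{k-1}^*)^{-}$, the identity $v_{k-1}(h_k(s)) = v_{k-1}(s) - 1$ forces $v_{k-1}(h_k(s)) \to 0$, and the strict monotonicity of $v_{k-1}$ on $\II$ guaranteed by Lemma \ref{lm:value-function-decreasing-general-f} then drives $h_k(s)$ to the right endpoint of the support, so $F(h_k(s)) \to 1$ and the two one-sided expressions for $v_k'$ agree in the limit. Continuity of $v_k'$ on each subregion separately is inherited from the continuity of $f$, $F$, $h_k$, and $v_{k-1}'$ through the inductive hypothesis, which completes the argument.
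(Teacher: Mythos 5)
Your proposal follows essentially the same route as the paper: induction on $k$, differentiation of \eqref{eq:Bellman-general-f-h-explicit} on the region $s < s_{k-1}^*$ (using Lemma~\ref{lm:hk-differentiable} and the cancellation supplied by \eqref{eq:hk-optimally-conservative}), direct differentiation of the simplified recursion on $s \ge s_{k-1}^*$, and then matching the two sides at the seam. One small merit of your write-up is that you actually justify why $F(h_k(s)) \to 1$ as $s \nearrow s_{k-1}^*$ (via $v_{k-1}(h_k(s)) = v_{k-1}(s) - 1 \to 0$ and strict monotonicity), a step the paper uses implicitly when taking one-sided limits. The one place you should be a bit more careful is the very last inference: having the right-hand derivative at $s_{k-1}^*$ together with $\lim_{s \nearrow s_{k-1}^*} v_k'(s)$ equal to it does not automatically say $v_k$ is differentiable there; you need either the standard mean-value-theorem lemma or the paper's piecewise-integration device (defining $\bar v$ and verifying $v_k(s) = v_k(0) + \int_0^s \bar v$) to close the loop. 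With that sentence added the argument is complete and agrees with the paper's.
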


\begin{proof}
We argue by induction on $k$, and we first note
for $k=1$ that $v_1(s) = 1 - F(s)$, so
$v_1'(s) = -f(s)$ and \eqref{eq:first-derivative-recursion-general-f} holds since $v_0(s)\equiv 0$.
Next, we assume by induction that $v_{k-1}$ is continuously differentiable on $\II$.
If $s < s^*_{k-1}$  then the induction assumption and Lemma \ref{lm:hk-differentiable}
imply that $h_{k}$ is differentiable at $s$.
We then differentiate \eqref{eq:Bellman-general-f-h-explicit} to find
\begin{align*}
v_k'(s)  = {}  & -  f(s) + \{ 1 - F(h_k(s)) + F(s) \} \, v_{k-1}'(s) \\
               & + f(h_k(s)) \{ 1 - v_{k-1}(s) + v_{k-1}(h_k(s)) \} h_k'(s) \nonumber \\
         = {}  & -  f(s) + \{ 1 - F(h_k(s)) + F(s) \} \, v_{k-1}'(s), \nonumber
\end{align*}
where the last step used the characterization \eqref{eq:hk-optimally-conservative} of $h_k$.
Alternatively, if $s > s^*_{k-1}$ we have $F(h_k(s)) = 1$ and \eqref{eq:Bellman-general-f-h-explicit} says simply that
$$
v_k( s )   = F( s ) v_{ k - 1 }( s ) +
\int_s^\infty \{ 1 + v_{ k - 1 }( x ) \}  f( x ) \, dx.
$$
Differentiation of this integral then gives us \eqref{eq:first-derivative-recursion-general-f}.
Thus, one has  that \eqref{eq:first-derivative-recursion-general-f} holds on all of
$\II_k = \II \setminus \{s^*_{k-1} \}$. Moreover, taking left and right limits in \eqref{eq:first-derivative-recursion-general-f} gives us
$$
\lim_{s \nearrow s^*_{k-1}} v_{k}'(s) = -f( s^*_{k-1} ) +  F(s^*_{k-1}) v_{k-1}'(s^*_{k-1}) = \lim_{s \searrow s^*_{k-1}} v_{k}'(s).
$$
It is almost obvious that these relations imply
the continuous differentiability $v_k$,
but to make it crystal clear let $\gamma$ be
the common value of the limits above and define a continuous function $\bar v: \II \rightarrow \R$ by setting
$$
\bar v(s) =
\begin{cases}
v_k'(s) &\quad \text{if } s < s^*_{k-1} \\
\gamma &\quad \text{if } s = s^*_{k-1} \\
v_k'(s) &\quad \text{if } s > s^*_{k-1}.
\end{cases}
$$
Next, we obtain by piecewise integration that
$$
v_k(s)= v_k(0) + \int_0^s \bar v(u) \, du \quad \text{for all } s \in \II,
$$
implying, as expected, that $v_k$ is continuously differentiable on $\II$.
\end{proof}

\section{Spending Symmetry: Curvature of the Value Functions }\label{se:DP-special-properties}

For any continuous $F$ the distribution of $L_n(\pi_n^*)$ is the same; this is an invariance property --- or a \emph{symmetry}.
When one chooses a particular $F$, say the uniform distribution, there is a sense in which one \emph{spends symmetry}.

All earlier analyses  of $L_n(\pi_n^*)$ passed
directly to the uniform distribution without any apparent thought
about what might be lost or gained by the transition. Still, it does make a difference how one spends this symmetry.
The distribution of $L_n(\pi_n^*)$ is insensitive to $F$, but the value functions are not.

Specifically, for the uniform distribution the value functions are concave, but for the exponential distribution the value functions are convex.
This change of curvature gives one access to different estimates. Over the next several sections we see how specialization of the
driving distribution has a big influence on the estimation of variances and conditional variances.

\subsection*{\sc Concavity of the Value Functions in the Uniform Model}\subsectionnewline

We first break symmetry in the conventional way and take $F$ to be the uniform distribution on $[0,1]$.
Specialization of the Bellman recursion \eqref{eq:Bellman-general-f}
defines the sequence of value functions $\{\vu_k : 1 \leq k < \infty\}$, and specialization of the characterization
\eqref{eq:optimal-threshold-general-f} defines the sequence of
threshold functions $\{\hu_k: 1 \leq k < \infty\}$.
Here, we have by \eqref{eq:optimal-threshold-general-f} that $\hu_k(s) \leq 1$ for all $s\in[0,1]$ and $1 \leq k <\infty$.

\begin{lemma}[Concavity of the Uniform Value Functions]\label{lm:value-function-concave-uniform}
For each $1 \leq k < \infty$
the value function $\vu_k : [0,1] \rightarrow \R^+$
is concave.
\end{lemma}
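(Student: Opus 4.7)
The plan is to induct on $k$ and prove the equivalent statement that the (continuous) derivative $(\vu_k)'$, whose existence on $\II = (0,1)$ is supplied by Proposition \ref{prop:vk-differentiable-w-continuous-derivative}, is non-increasing. The base case $k=1$ is immediate since $\vu_1(s) = 1 - s$ has constant derivative $-1$. Concavity on the closed interval $[0,1]$ then follows from concavity on $(0,1)$ together with the continuity of $\vu_k$ provided by the Bellman recursion \eqref{eq:Bellman-general-f}.

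For the inductive step, assume $(\vu_{k-1})'$ is non-increasing (i.e.\ $\vu_{k-1}$ is concave). Specializing \eqref{eq:first-derivative-recursion-general-f} to $f \equiv 1$ and $F(s) = s$ I would write
\[
(\vu_k)'(s) = -1 + A(s)\,B(s), \qquad A(s) := 1 - \hu_k(s) + s, \quad B(s) := (\vu_{k-1})'(s),
\]
and aim to show that $A$ is non-negative and non-decreasing, while $B$ is non-positive and non-increasing. Non-positivity of $B$ is Lemma \ref{lm:value-function-decreasing-general-f}, and monotonicity of $B$ is the inductive hypothesis. Non-negativity $A(s) \geq s \geq 0$ follows from $\hu_k(s) \leq 1$. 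For the monotonicity of $A$ I would split at the critical value $s_{k-1}^*$: for $s \geq s_{k-1}^*$ one has $\hu_k(s) = 1$, so $A(s) = s$; for $s < s_{k-1}^*$, Lemma \ref{lm:hk-differentiable} gives
\[
A'(s) = 1 - (\hu_k)'(s), \qquad (\hu_k)'(s) = \frac{(\vu_{k-1})'(s)}{(\vu_{k-1})'(\hu_k(s))}.
\]
Since $s \leq \hu_k(s)$ and the inductive hypothesis says $(\vu_{k-1})'$ is non-increasing, numerator and denominator are both negative with $(\vu_{k-1})'(s) \geq (\vu_{k-1})'(\hu_k(s))$, so the ratio lies in $[0,1]$ and $A'(s) \geq 0$. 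Continuity of $A$ at $s_{k-1}^*$ glues the two pieces.

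Given these monotonicity/sign properties, concavity of $\vu_k$ is a one-line identity: for $0 \leq s < t \leq 1$,
\[
A(s)B(s) - A(t)B(t) = B(s)\{A(s) - A(t)\} + A(t)\{B(s) - B(t)\} \geq 0,
\]
because each summand is a product of a non-positive factor and a non-positive factor, resp.\ two non-negative factors. Hence $A B$ is non-increasing, so $(\vu_k)' = -1 + AB$ is non-increasing, completing the induction. The main obstacle, as I see it, is the bound $(\hu_k)'(s) \leq 1$: this is the one place where the \emph{full} inductive hypothesis (concavity of $\vu_{k-1}$, not just its monotonicity or differentiability) is needed, and the switch between the differentiable regime $s < s_{k-1}^*$ and the constant regime $\hu_k \equiv 1$ on $[s_{k-1}^*,1]$ has to be handled explicitly. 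Everything else reduces to bookkeeping of signs.
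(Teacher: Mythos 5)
Your proof is correct and follows essentially the same path as the paper's: induction on $k$ to show $(\vu_k)'$ is non-increasing, the specialized derivative recursion $(\vu_k)' = -1 + \{1 - \hu_k + \mathrm{id}\}(\vu_{k-1})'$, and the key Lipschitz-1 bound $(\hu_k)'(s) \leq 1$ on $(0, s_{k-1}^*)$ obtained from the inductive concavity hypothesis, glued across $s_{k-1}^*$ where $\hu_k \equiv 1$. The paper packages the sign bookkeeping through the two-term difference $\{1 - \hu_k(s) + s\}\{(\vu_{k-1})'(s+\epsilon) - (\vu_{k-1})'(s)\} + \{\hu_k(s) - \hu_k(s+\epsilon) + \epsilon\}(\vu_{k-1})'(s+\epsilon)$ rather than through your (algebraically equivalent) $A$--$B$ decomposition, but the underlying reasoning is the same.
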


\begin{proof}
Proposition \ref{prop:vk-differentiable-w-continuous-derivative}
tells us  $\vu_k$  is continuously differentiable
on $(0,1)$, and we prove concavity by showing that $s \mapsto (\vu_k)'(s)$ non-increasing on $(0,1)$.
We let $\HH_k$ be the assertion
$$
(\vu_k)'(s + \epsilon) \leq (\vu_k)'(s) \quad \quad
\text{for all } s \in (0,1) \, \text{and } 0 < \epsilon < 1-s,
$$
and we argue by induction. For $k = 1$ we have $\vu_1(s) = 1-s$, so $(\vu_1)'(s) = - 1$
and $\HH_1$ holds trivially.

Now, if we specialize the derivative recursion \eqref{eq:first-derivative-recursion-general-f}
to the uniform model we have
\begin{equation*}
(\vu_{k})'(s) = -1 + \{ 1 - \hu_k(s) + s \} ( \vu_{k-1})'(s), \quad \quad \text{for } s \in (0,1),
\end{equation*}
so if we assume that $\HH_{k-1}$ holds then we have
\begin{align}\label{eq:first-derivative-difference-uniform}
(\vu_k)'(s + \epsilon) - (\vu_k)'(s)
 = {} & \{ 1 - \hu_k(s) + s \} \{ (\vu_{k-1})'(s + \epsilon) - (\vu_{k-1})'(s)  \} \\
& + \{  \hu_k(s)  - \hu_k(s + \epsilon)  + \epsilon \} (\vu_{k-1})'(s + \epsilon). \nonumber
\end{align}
Since $0 \leq s \leq \hu_k(s) \leq 1$, we see from $\HH_{k-1}$
that the first summand on the right-hand side
of \eqref{eq:first-derivative-difference-uniform} is non-positive.
Monotonicity of $\vu_k$ also tells us
$(\vu_{k-1})'(s + \epsilon) \leq 0$,
so to complete the induction step we just need to check that
\begin{equation}\label{eq:g-first-derivatives-uniform}
g(s, \epsilon) \equalbydef \hu_k(s + \epsilon) -  \hu_k(s)   \leq  \epsilon.
\end{equation}
From the definition of the critical value $s^*_{k-1}$ we have
\begin{equation*}
g(s, \epsilon) =
\begin{cases}
    \hu_k(s + \epsilon) - \hu_k(s ) & \text{ if } s < s+\epsilon < s^*_{k-1} \\
    1 -  \hu_k(s )                  & \text{ if } s < s^*_{k-1} \leq s+\epsilon \\
    0                               & \text{ if } s^*_{k-1} \leq s < s+\epsilon,
\end{cases}
\end{equation*}
so we only need to check \eqref{eq:g-first-derivatives-uniform} in the first two cases.

For $s < s+\epsilon < s^*_{k-1}$, we know by Lemma \ref{lm:hk-differentiable} that
$\hu_k(s )$ is differentiable at $s$, so by the
induction assumption $\HH_{k-1}$ and the negativity of $(\vu_{k-1})'(s)$
we have
$$
0 \leq (\hu_k)'(s ) \leq 1 \quad \quad \text{for all } s \in (0, s^*_{k-1}).
$$
Thus, $\hu_k$ is Lipschitz-1 continuous on $(0, s^*_{k-1})$,  and we have
\begin{equation*}
g(s, \epsilon) = \hu_k(s + \epsilon) - \hu_k(s )  \leq \epsilon \quad \quad \text{ for all } s < s+\epsilon < s^*_{k-1}.
\end{equation*}
For the second case where $s < s^*_{k-1} \leq s+\epsilon$, we first note that $\hu_k(s^*_{k-1})=1$ and that $\hu_k$ is continuous, so
we have
\begin{equation*}
g(s, \epsilon)
= \lim_{u \nearrow s^*_{k-1}} \{ \hu_k(u) - \hu_k(s ) \}
\leq \lim_{u \nearrow s^*_{k-1}} \{ u - s \} = s^*_{k-1} - s < \epsilon,
\end{equation*}
where the inequality follows from the Lipschitz-$1$ property of $\hu_k$. This completes the second check and the proof of the
induction step.
\end{proof}

\subsection*{\sc Convexity of the Value Functions in the Exponential Model}\subsectionnewline \label{se:convexity-exponential}

We now break the symmetry in a second way.
We take $F(x) = 1 - e^{-x}$ for $x \geq 0$, and we let $\ve_k$ and $\he_k$ denote the corresponding
value and threshold functions. We will shortly find that $\ve_k$ is convex on $[0, \infty)$
for all $k\geq 1$, but we need a preliminary lemma.

\begin{lemma}\label{lm:first-derivative-LB-exponential}
For $1 \leq k <\infty$ and $s \in (0, \infty)$ one has
\begin{equation}\label{eq:first-derivative-exponential-lower-bound}
 - \{ 1 - e^{ - \he_{k+1}(s) + s  } \}^{-1} \leq (\ve_k)'(s).
\end{equation}
\end{lemma}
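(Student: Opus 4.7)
The plan is to prove this by induction on $k$, chaining the derivative recursion from Proposition \ref{prop:vk-differentiable-w-continuous-derivative} against the threshold monotonicity of Corollary \ref{cor:monotonicity-optimal-thresholds}. Let $\HH_k$ denote the claimed inequality at index $k$. For the base case $k=1$, I would compute $\ve_1(s) = 1 - F(s) = e^{-s}$, so $(\ve_1)'(s) = -e^{-s}$; the bound $-e^{-s} \geq -\{1 - e^{-\he_2(s)+s}\}^{-1}$ rearranges to $e^{-s} - e^{-\he_2(s)} \leq 1$, which is immediate.

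For the inductive step, specializing \eqref{eq:first-derivative-recursion-general-f} to $F(x) = 1 - e^{-x}$ gives
$$(\ve_k)'(s) = -e^{-s} + \{e^{-\he_k(s)} + 1 - e^{-s}\}\,(\ve_{k-1})'(s),$$
where the bracketed coefficient is manifestly non-negative. The inductive hypothesis $\HH_{k-1}$ reads $(\ve_{k-1})'(s) \geq -\{1 - e^{-\he_k(s)+s}\}^{-1}$, so substituting, collecting over the common denominator $1 - e^{-\he_k(s)+s}$, and using the algebraic identity $e^{-s}\bigl(1 - e^{-\he_k(s)+s}\bigr) = e^{-s} - e^{-\he_k(s)}$, a clean telescoping occurs:
$$-e^{-s} - \frac{e^{-\he_k(s)} + 1 - e^{-s}}{1 - e^{-\he_k(s)+s}} = -\frac{(e^{-s} - e^{-\he_k(s)}) + (e^{-\he_k(s)} + 1 - e^{-s})}{1 - e^{-\he_k(s)+s}} = -\frac{1}{1 - e^{-\he_k(s)+s}}.$$
Hence $(\ve_k)'(s) \geq -\{1 - e^{-\he_k(s)+s}\}^{-1}$, which is actually \emph{tighter} than $\HH_k$.

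To finish, I would invoke Corollary \ref{cor:monotonicity-optimal-thresholds}: since $\he_{k+1}(s) \leq \he_k(s)$, we have $e^{-\he_k(s)+s} \leq e^{-\he_{k+1}(s)+s}$, so $1 - e^{-\he_k(s)+s} \geq 1 - e^{-\he_{k+1}(s)+s} > 0$, and our bound weakens to $\HH_k$. The only real points to watch are the sign bookkeeping (so that the IH is multiplied by a non-negative factor and the inequality direction is preserved) and the degenerate case $\he_k(s) = \infty$, which occurs precisely when $\ve_{k-1}(s) \leq 1$; there $e^{-\he_k(s)} = 0$ and every formula above remains valid by the natural limiting convention. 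The heart of the proof is the one-line telescoping identity; the rest is routine.
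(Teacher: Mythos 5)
Your proof is correct and follows the paper's argument exactly: induction via the derivative recursion \eqref{eq:first-derivative-exponential}, the telescoping simplification to $-\{1 - e^{-\he_k(s)+s}\}^{-1}$ after substituting the inductive hypothesis, and then the threshold monotonicity $\he_{k+1}(s) \leq \he_k(s)$ from Corollary~\ref{cor:monotonicity-optimal-thresholds} to relax this to the stated bound. The only cosmetic difference is in the base case, where the paper identifies $\he_2(s) = \infty$ directly so the lower bound equals $-1$, while you rearrange to the equivalent inequality $e^{-s} - e^{-\he_2(s)} \leq 1$.
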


\begin{proof}
By Proposition \ref{prop:vk-differentiable-w-continuous-derivative} we know
that $\ve_k$ is continuously differentiable
and by \eqref{eq:first-derivative-recursion-general-f} we have
\begin{equation}\label{eq:first-derivative-exponential}
(\ve_k)'(s) = ( 1 - e^{-s} + e^{-\he_k(s)} ) (\ve_{k-1})'(s) - e^{-s}.
\end{equation}
We now let  $\HH_k$ be the assertion that
$$
 - \{ 1 - e^{ - \he_{k+1}(s) + s  } \}^{-1} \leq (\ve_k)'(s), \quad \quad \text{for all } s \in (0, \infty),
$$
and we argue by induction.
For $k = 1$ we have $\ve_1(s) = e^{-s}<1$,  so by \eqref{eq:optimal-threshold-general-f}
we have $\he_2(s) = \infty$. In turn this  gives us
$$
- \{ 1 - e^{ - \he_2(s) + s } \}^{-1} = -1 \leq  - e^{-s} = (\ve_1)'(s),
$$
which verifies $\HH_1$.

Next, if we assume that $\HH_{k-1}$ holds and we substitute the lower bound from $\HH_{k-1}$
into \eqref{eq:first-derivative-exponential}, then rearrangement gives us
$$
- \{ 1 - e^{ - \he_{k}(s) + s } \}^{-1} [1 - e^{-s} \{1 - e^{ - \he_k(s) + s } \} ] - e^{-s} \leq (\ve_{k})'(s).
$$
From \eqref{eq:optimal-threshold-monotonicity} we have $\he_{k+1}(s) \leq \he_k(s)$, so we now
have
$$
- \{ 1 - e^{ - \he_{k+1}(s) + s } \}^{-1} \leq - \{ 1 - e^{ - \he_{k}(s) + s } \}^{-1} \leq (\ve_{k})'(s),
$$
and this is just what one needs to complete the induction step.
\end{proof}

We now have the main result of this section.

\begin{lemma}[Convexity of the Exponential Value Functions]\label{lm:value-functions-exponential-convexity}
For each $1 \leq k < \infty$,
the value function $\ve_k : [0, \infty) \rightarrow \R^+$
is convex on $[0, \infty)$.
\end{lemma}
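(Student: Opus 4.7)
The plan is to prove convexity by induction on $k$, organized around the assertion
\[ \HH_k: \quad (\ve_k)'(s+\epsilon) \geq (\ve_k)'(s) \quad \text{for all } s \in (0,\infty) \text{ and } \epsilon > 0. \]
The base case is immediate since $\ve_1(s) = e^{-s}$ gives $(\ve_1)'(s) = -e^{-s}$, which is strictly increasing.

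For the inductive step, assume $\HH_{k-1}$. Starting from the derivative recursion \eqref{eq:first-derivative-exponential}, I would express the difference $(\ve_k)'(s+\epsilon) - (\ve_k)'(s)$ after adding and subtracting $A_k(s+\epsilon)(\ve_{k-1})'(s)$ as
\begin{align*}
&A_k(s+\epsilon)\bigl[(\ve_{k-1})'(s+\epsilon) - (\ve_{k-1})'(s)\bigr] \\
&\quad + e^{-s}(1-e^{-\epsilon})\bigl[1 + (\ve_{k-1})'(s)\bigr]
 - \bigl[e^{-\he_k(s)} - e^{-\he_k(s+\epsilon)}\bigr](\ve_{k-1})'(s),
\end{align*}
where $A_k(s) = 1 - e^{-s} + e^{-\he_k(s)} \geq 0$. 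The first summand is non-negative by $\HH_{k-1}$. The third is non-negative, because $\he_k$ is non-decreasing in $s$ (immediate from \eqref{eq:optimal-threshold-general-f} and the monotonicity of $\ve_{k-1}$) and $(\ve_{k-1})'(s) \leq 0$. If additionally $1 + (\ve_{k-1})'(s) \geq 0$, the middle summand is non-negative and the step is complete.

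The main obstacle is the regime where $(\ve_{k-1})'(s) < -1$. Here Lemma \ref{lm:first-derivative-LB-exponential} forces $\he_k(s) < \infty$, so $s < s^*_{k-1}$ and Lemma \ref{lm:hk-differentiable} gives $\he_k'(s) = (\ve_{k-1})'(s) / (\ve_{k-1})'(\he_k(s))$. Since $\he_k(s) \geq s$, the hypothesis $\HH_{k-1}$ implies $(\ve_{k-1})'(\he_k(s)) \geq (\ve_{k-1})'(s)$; both sides being negative, the ratio satisfies $\he_k'(s) \geq 1$. Integration (with $\he_k \equiv \infty$ past $s^*_{k-1}$ to cover the case when $s+\epsilon$ crosses that threshold) yields the Lipschitz-from-below estimate $\he_k(s+\epsilon) - \he_k(s) \geq \epsilon$. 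On the other hand Lemma \ref{lm:first-derivative-LB-exponential} rearranges to $|(\ve_{k-1})'(s)| - 1 \leq e^{s - \he_k(s)} |(\ve_{k-1})'(s)|$, so the required estimate
\[
\bigl[e^{-\he_k(s)} - e^{-\he_k(s+\epsilon)}\bigr] |(\ve_{k-1})'(s)| \geq e^{-s}(1-e^{-\epsilon}) \bigl(|(\ve_{k-1})'(s)| - 1\bigr)
\]
collapses to $e^{\he_k(s) - \he_k(s+\epsilon)} \leq e^{-\epsilon}$, which is precisely the Lipschitz-from-below estimate. Continuity of $(\ve_k)'$ at $s^*_{k-1}$ (Proposition \ref{prop:vk-differentiable-w-continuous-derivative}) and continuity at $s=0$ extend $\HH_k$ to all of $[0,\infty)$, completing the induction. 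The delicate point is thus the interplay of the \emph{two} consequences of Lemma \ref{lm:first-derivative-LB-exponential}: first, it bounds $|(\ve_{k-1})'(s)|$ so that the middle summand is not too negative, and, second, through $\he_k(s)<\infty$, it feeds into the Lipschitz-from-below estimate that makes the third summand large enough to compensate.
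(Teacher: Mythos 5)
Your proof is correct and is essentially the paper's proof: you derive the same algebraic decomposition (the paper's display \eqref{eq:first-derivative-difference-exponential} is yours after a sign flip and a regrouping), the same Lipschitz-from-below estimate $\he_k(s+\epsilon)-\he_k(s)\ge\epsilon$ from the induction hypothesis via \eqref{eq:hk-first-derivative}, and you close with Lemma \ref{lm:first-derivative-LB-exponential}. The only difference is organizational: the paper handles the last two terms together in one step, using the Lipschitz bound to reduce to $-[1-e^{-\he_k(s)+s}](\ve_{k-1})'(s)\le 1$, whereas you split on the sign of $1+(\ve_{k-1})'(s)$ and invoke the Lipschitz estimate only in the hard regime.
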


\begin{proof}
By Proposition \ref{prop:vk-differentiable-w-continuous-derivative}
we know that $\ve_k$
is continuously differentiable, and we again argue by induction. This time
we take $\HH_k$ to be the assertion
$$
(\ve_k)'(s) \leq (\ve_k)'(s + \epsilon) \quad \quad \text{for all } s \in (0, \infty) \text{ and } \epsilon>0.
$$
For $k = 1$, we have $\ve_1(s) = e^{-s}$ and $(\ve_1)'(s) = - e^{-s}$ so the base case $\HH_1$ of the induction
is valid.

Now, by \eqref{eq:first-derivative-exponential} applied twice we have
\begin{align}\label{eq:first-derivative-difference-exponential}
(\ve_k)'(s) \!-\! (\ve_k)'(s + \epsilon)
& = [1 \!-\! e^{-s - \epsilon} \!+\! e^{-\he_k(s + \epsilon)}]\{ (\ve_{k-1})'(s) \!-\! (\ve_{k-1})'(s + \epsilon)\}\\
& + \{ e^{-\he_k(s)}[ 1 \!-\! e^{-\he_k(s + \epsilon) + \he_k( s )}] \!-\! e^{-s}[ 1 \!-\! e^{-\epsilon}] \}  (\ve_{k-1})'(s) \nonumber\\
& - e^{-s}[1 \!-\! e^{-\epsilon}].\nonumber
\end{align}
The induction hypothesis $\HH_{k-1}$, tells us that $s \mapsto \ve_{k-1}(s)$ is convex,
so by \eqref{eq:hk-first-derivative} we have $(\he_k)'(s) \geq 1$ for $s \in (0, \infty)$, and
this gives us the bound
$$
- \he_k(s + \epsilon) + \he_k( s )  \leq - \epsilon.
$$
We always have $s \leq \he_k(s)$ and $(\ve_{k-1})'(s) \leq 0$,
so \eqref{eq:first-derivative-difference-exponential} implies the simpler bound
\begin{align*}
(\ve_k)'(s) - (\ve_k)'(s + \epsilon)
\leq {} & [1 - e^{-s - \epsilon} + e^{-\he_k(s + \epsilon)}]\{  (\ve_{k-1})'(s) - (\ve_{k-1})'(s + \epsilon) \}\\
& -  e^{-s}[ 1 - e^{-\epsilon}][1 -  e^{-\he_k(s) + s }]  (\ve_{k-1})'(s) \nonumber\\
& -  e^{-s}[1 - e^{-\epsilon}].\nonumber
\end{align*}
We only need to check that this bound is non-positive. By the induction hypothesis $\HH_{k-1}$
and $ s + \epsilon \leq \he_k(s + \epsilon)$,  we see the first term is non-positive.
The bound \eqref{eq:first-derivative-exponential-lower-bound} tells us
$$
-  [1 -  e^{-\he_k(s) + s }]  (\ve_{k-1})'(s) \leq 1,
$$
so, when we replace $-  [1 -  e^{-\he_k(s) + s }]  (\ve_{k-1})'(s)$ with its
upper bound, we also see that the second and the third terms sum to zero. This completes the proof of the induction step
and of the lemma.
\end{proof}

\section{Martingale Relations and  $L_n(\pi^*_n)$}\label{se:probabilistic-interpretation-and-Optimality-martingale}

One can represent  $L_n(\pi^*_n)$ as a sum of functionals of a time non-homogeneous Markov chain.
To see how this goes, we first set $ M_0 = 0 $
and then we define $M_i$ recursively by
\begin{equation}\label{eq:running-maximum-general-f}
M_i =
\begin{cases}
	M_{ i - 1 } & \text{ if } X_i \not\in [ M_{ i - 1 }, h_{ n - i + 1 }( M_{ i - 1 } ) ] \\
	X_i & \text{ if } X_i \in [ M_{ i - 1 }, h_{ n - i + 1 }( M_{ i - 1 } ) ],
\end{cases}
\end{equation}
so, less formally, $ M_i $ is the maximum value of the elements
of the subsequence that have been selected up to and including time $ i $.
Since we accept $X_i$ if and only if $X_i \in [ M_{ i - 1 }, h_{ n - i + 1 }( M_{ i - 1 } ) ]$
and since $L_n(\pi^*_n)$ counts the number of
the observations $X_1, X_2, \ldots, X_n$ that we accept, we have
\begin{equation}\label{eq:Ln}
L_n(\pi^*_n) = \sum_{i=1}^n \1 ( X_i \in  [ M_{ i - 1 }, h_{ n - i + 1 }( M_{ i - 1 } ) ] ).
\end{equation}
It is also useful to set $L_0(\pi^*_n) = 0 $ and to introduce the shorthand,
\begin{equation*}
L_i(\pi^*_n) \equalbydef
\sum_{j=1}^i \1 ( X_j \in  [ M_{ j - 1 }, h_{ n - j + 1 }( M_{ j - 1 } ) ] ), \quad \quad \text{for } 1 \leq i \leq n.
\end{equation*}
We now come to a martingale that is central to the rest of our analysis.

\begin{proposition}[Optimality Martingale]\label{pr:Optimality-martingale}
The process  $\{ Y_i: i=0,1, \ldots, n \}$  defined by setting
\begin{equation}\label{eq:Optimality-martingale}
Y_i = L_i (\pi_n^*) + v_{ n - i }( M_i ) \quad \quad for \ 0 \le i \le n,
\end{equation}
is a martingale with respect to the filtration
$ \F_i = \sigma \{ X_1, X_2, \ldots, X_i \}$, $ 1 \le i \le n $.
\end{proposition}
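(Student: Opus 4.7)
The plan is to verify the martingale identity $\E[Y_{i+1} \mid \F_i] = Y_i$ directly from the Bellman recursion in the form \eqref{eq:Bellman-general-f-h-explicit}. Measurability and integrability are easy: $M_i$ and $L_i(\pi_n^*)$ are deterministic functions of $(X_1,\ldots,X_i)$ by the recursions \eqref{eq:running-maximum-general-f} and \eqref{eq:Ln}, hence $\F_i$-measurable, and $0 \le Y_i \le n + v_0(0) + n = 2n$, since $v_k \le k$, so boundedness handles integrability. The real work is the conditional expectation.

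I would fix $i$ and condition on $\F_i$, writing $s = M_i$. The one-step increment of $L$ is the indicator $\1(X_{i+1} \in [s, h_{n-i}(s)])$, and by \eqref{eq:running-maximum-general-f} the state $M_{i+1}$ equals $X_{i+1}$ on this event and equals $s$ off of it. Partitioning on whether $X_{i+1}$ falls inside $[s, h_{n-i}(s)]$ or outside and using independence of $X_{i+1}$ from $\F_i$ yields
\begin{align*}
\E[Y_{i+1} - Y_i \mid \F_i] = {} & \bigl\{1 - F(h_{n-i}(s)) + F(s)\bigr\}\bigl\{v_{n-i-1}(s) - v_{n-i}(s)\bigr\} \\
& + \int_s^{h_{n-i}(s)} \bigl\{1 + v_{n-i-1}(x) - v_{n-i}(s)\bigr\}\, dF(x).
\end{align*}

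The second step is to recognize that \eqref{eq:Bellman-general-f-h-explicit} is exactly
\[
v_{n-i}(s) = \bigl\{1 - F(h_{n-i}(s)) + F(s)\bigr\} v_{n-i-1}(s) + \int_s^{h_{n-i}(s)} \bigl\{1 + v_{n-i-1}(x)\bigr\}\, dF(x),
\]
so substituting this expression into the display above and cancelling the common $v_{n-i-1}(s)$ pieces against the integrand gives $\E[Y_{i+1} - Y_i \mid \F_i] = 0$.

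There is essentially no obstacle; the argument is a bookkeeping check. The only place to be careful is to use the form \eqref{eq:Bellman-general-f-h-explicit} rather than \eqref{eq:Bellman-general-f}, because the threshold characterization \eqref{eq:optimal-threshold-general-f} of $h_{n-i}$ is what guarantees that the event $\{X_{i+1} \in [s,h_{n-i}(s)]\}$ coincides with the event ``the optimal policy accepts $X_{i+1}$''; this coincidence is what links the increment of $L$ to the integrand running only from $s$ to $h_{n-i}(s)$ and makes the cancellation work.
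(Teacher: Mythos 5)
Your proof is correct, but it takes a genuinely different route from the paper's. The paper's proof is a one-liner: since $\pi_n^*$ is a Markov policy, the very definition of the value function gives $v_{n-i}(M_i) = \E[L_n(\pi_n^*) - L_i(\pi_n^*) \mid \F_i]$, so $Y_i = \E[L_n(\pi_n^*) \mid \F_i]$ is a Doob martingale by the tower property. You instead verify the one-step identity $\E[Y_{i+1} - Y_i \mid \F_i] = 0$ by a direct computation against the Bellman recursion in the form \eqref{eq:Bellman-general-f-h-explicit}; your display is exactly right, and the cancellation is most transparently seen by noting that the coefficient of $-v_{n-i}(s)$ across the two pieces sums to $\{1 - F(h_{n-i}(s)) + F(s)\} + \{F(h_{n-i}(s)) - F(s)\} = 1$, while the remaining terms reassemble into $v_{n-i}(s)$ by \eqref{eq:Bellman-general-f-h-explicit}. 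What each approach buys: the paper's version is shorter and makes the martingale structure conceptually transparent (and immediately yields the cap identity \eqref{eq:Optimality-martingale-CAP}), but it leans on the identification of $v_{n-i}(M_i)$ as a conditional expectation inside the original $n$-step problem, which rests on the Markov character of the optimal policy and is asserted rather than proved. Your version is more pedestrian but entirely self-contained: it uses only the stated Bellman recursion, so it makes the link between dynamic programming and the martingale property fully explicit and does not silently appeal to the probabilistic meaning of the $v_k$. A small side note on your boundedness claim: you can tighten $Y_i \le 2n$ to $Y_i = L_i(\pi_n^*) + v_{n-i}(M_i) \le i + (n-i) = n$, though either bound suffices for integrability.
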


\noindent\emph{Proof.} 
Obviously $ Y_i $ is $ \F_i$-measurable and bounded. Moreover, by the definition of $ v_{ n - i }( s ) $ we have
$ v_{ n - i }( M_i ) = \E  [ L_n ( \pi_n^* ) - L_i( \pi_n^* ) \ | \ \F_{ i } ] $,
so
\[
\pushQED{\qed}
Y_i = L_i( \pi_n^* ) + \E  [ L_n ( \pi_n^* ) - L_i( \pi_n^* ) \ | \ \F_{ i } ] = \E  [ L_n ( \pi_n^* ) \ | \ \F_{ i } ].\qedhere
\popQED
\]

Since the martingale $\{Y_i: 1 \leq i \leq n\}$ is capped by $L_n ( \pi_n^* )$, we also have the explicit identity
\begin{equation}\label{eq:Optimality-martingale-CAP}
\E [ L_n ( \pi_n^* ) \ | \ \F_{ i } ]= L_i (\pi_n^*) + v_{ n - i }( M_i ),
\end{equation}
which is often useful.

\subsection*{\sc Conditional Variances}\subsectionnewline

In \eqref{eq:Optimality-martingale}, the term $ v_{ n - i }( M_i ) = \E  [ L_n ( \pi_n^* ) - L_i( \pi_n^* ) \ | \ \F_{ i } ] $
tells us the expected number of selections that the policy $\pi_n^*$ will make from $\{ X_{i+1}, X_{i+2}, \ldots, X_n \}$
given the current value $M_i$ of the running maximum. There is a useful notion of \emph{conditional variance} that
is perfectly analogous. Specifically, we set
\begin{eqnarray}\label{eq:wf-identity}
w_{n-i} (M_{i}) \!\!\!\!  & \equalbydef & \!\!\!\! \Var[ L_n(\pi^*_n) - L_i (\pi^*_n) \,|\, \F_i] \\
                \!\!\!\!   & = & \!\!\!\!  \E[ \{L_n(\pi^*_n) - L_i (\pi^*_n) - v_{n-i} (M_i) \}^2 \,|\, \F_i ]. \notag
\end{eqnarray}
Here, of course, if $i = 0$ we always have $M_0 = 0$ and
$$
w_{n} (M_{0}) = \Var[ L_n(\pi^*_n)].
$$

The martingale $\{Y_i, \F_i\}_{i=0}^n$ defined by \eqref{eq:Optimality-martingale} leads in a natural way to
an informative representation
for the conditional variance \eqref{eq:wf-identity}, and one starts with
the difference sequence
\begin{equation}\label{eq:MDS-general-f}
d_j = Y_j - Y_{j-1}, \quad \quad \text{where } 1 \leq j \leq n.
\end{equation}
By \eqref{eq:Optimality-martingale} and telescoping of the sum we have
\begin{equation}\label{eq:partial-sum-MDS-general-f}
\sum_{j = i+1}^n d_j = L_n ( \pi^*_n ) - L_i ( \pi^*_n ) - v_{n-i} (M_i), \quad \quad \text{ for } 0 \leq i \leq n,
\end{equation}
so by orthogonality of the martingale differences we get
\begin{equation}\label{eq:variance-functions-general-f-BASIC}
w_{n-i} (M_{i}) = \Var[ L_n(\pi^*_n) - L_i (\pi^*_n) \,|\, \F_i] = \sum_{j = i+1}^n \E[ d_j^2 \,|\, \F_i ].
\end{equation}

This representation for the conditional variance $w_{n-i}$ can be usefully reframed
by taking a more structured view of the martingale differences \eqref{eq:MDS-general-f}.
Specifically, we write
\begin{equation}\label{eq:MDS-decomposition-A-B}
d_j = A_j + B_j,
\end{equation}
where the variable
\begin{equation}\label{eq:definition-Rj}
B_j \equalbydef v_{ n - j }( M_{j-1} ) - v_{ n - j + 1 }( M_{j-1} )
\end{equation}
represents the change in the martingale $ Y_j $
when we do not select $ X_j $, and where
\begin{equation}\label{eq:definition-Aj}
A_j \equalbydef ( 1 + v_{ n -j }( X_j ) - v_{ n - j}( M_{ j - 1 } ) ) \1 ( X_j \in [ M_{ j -1 }, h_{ n - j + 1 }( M_{ j - 1 } ) ] )
\end{equation}
is the \emph{additional} contribution to the change in the martingale $ Y_j $ when we do select $ X_j $.
Since $ B_j $ is $ \F_{ j - 1 } $-measurable, we have
$$
\E  [ d_j^2 \ | \ \F_{ j - 1 } ] = \E  [ A_j^2 \ | \ \F_{ j - 1 } ] + 2  B_j \ \E  [ A_j \ | \ \F_{ j - 1 } ] + B_j^2,
$$
and we also have $ 0 = \E  [ d_j \ | \ \F_{ j - 1 } ] = B_j + \E  [ A_j \ | \ \F_{ j - 1 } ] $, so
\begin{equation}\label{eq:martingale-difference-decomposition-general-f}
\E  [ d_j^2 \ | \ \F_{ j - 1 } ] = \E  [ A_j^2 \ | \ \F_{ j - 1 } ] - B_j^2.
\end{equation}

Now, for $j=i+1$ to $n$, we take the conditional expectation in \eqref{eq:martingale-difference-decomposition-general-f}
with respect to $\F_i$. When we sum these terms and recall \eqref{eq:variance-functions-general-f-BASIC}
we get our final representation for conditional variances
\begin{equation}\label{eq:variance-functions-general-f}
w_{n-i} (M_{i}) = \Var[ L_n(\pi^*_n) - L_i (\pi^*_n) \,|\, \F_i] = \sum_{j = i+1}^n \{ \E[ A_j^2 \,|\, \F_i ] - \E[B_j^2 \ | \ \F_i] \}.
\end{equation}

The decomposition \eqref{eq:variance-functions-general-f} was our main goal here,
but before concluding the section we should make one further inference from \eqref{eq:MDS-decomposition-A-B}.
By the defining representation \eqref{eq:optimal-threshold-general-f}
for $h_{k}$ we have $0 \leq A_j \leq 1$,
and by our bound \eqref{eq:simple-difference-bounds} on the value function differences we have
$ -1 \leq B_j \leq 0$.
Hence one has a uniform bound on the martingale differences
\begin{equation}\label{eq:MDS-bounded}
 | d_j | = | A_j + B_j | \leq 1 \quad \quad \text{ for } 1 \leq j \leq n.
\end{equation}

\section{\sc Inferences from the Uniform Model}\label{se:inferences-uniform-model}

We now consider the decompositions of Section \ref{se:probabilistic-interpretation-and-Optimality-martingale}
when $F$ is the uniform distribution on $[0,1]$, and we use superscripts to
make this specialization explicit. In particular, we let $\Xu_1, \Xu_2, \ldots, \Xu_n$ be the underlying sequence of $n$ independent
uniformly distributed random variables, and we write  $\Mu_{ i}$ for the value of the last observation
selected up to and including time $i\geq 1$ (and, as usual, we set $\Mu_0 = 0$).
Lemma \ref{lm:value-function-concave-uniform} tells us that the value function $\vu_k$ is concave,
and this is crucial to the proof of the lower bound for the conditional variance of
$\Lu_n(\pi^*_n)$.

\begin{proposition}[Conditional Variance Lower Bound] \label{pr:variance-lower-bound-uniform}
For $ 0 \leq i \leq n$ one has
\begin{equation*}
\frac{1}{3}  \, \vu_{n-i}( \Mu_i ) - 2 \leq \wu_{n-i} (\Mu_{i}).
\end{equation*}
\end{proposition}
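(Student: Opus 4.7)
The plan begins with the martingale decomposition \eqref{eq:variance-functions-general-f}, specialised to the uniform case. Setting $s = M^u_{j-1}$, $h = h^u_{n-j+1}(s)$, $p_j = h - s$, and $g_j(x) := 1 + v^u_{n-j}(x) - v^u_{n-j}(s)$, one has $\E[A_j^2 \mid \F_{j-1}] = \int_s^h g_j(x)^2\,dx$. The concavity of $v^u_{n-j}$ from Lemma~\ref{lm:value-function-concave-uniform} makes $g_j$ concave in $x$; in the conservative regime $v^u_{n-j}(s) > 1$, relation \eqref{eq:hk-optimally-conservative} also forces $g_j(h) = 0$. The chord lower bound $g_j(x) \geq 1 - (x-s)/p_j$ then yields the pointwise estimate $\int_s^h g_j(x)^2\,dx \geq p_j/3$.

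To combine this with a matching control of the $B_j^2$ contributions, I would derive the algebraic identity
\[
\int_s^h g_j(x)^2\,dx - B_j^2 \;=\; \Bigl(2\!\int_s^h v^u_{n-j}(x)\,dx - p_j\Bigr) \;+\; \bigl(\E[R_{j+1} \mid \F_{j-1}] - R_j\bigr),
\]
where $R_j := (v^u_{n-j+1}(M^u_{j-1}) - 1)^2$. This follows by expanding $g_j^2$, substituting the Bellman recursion \eqref{eq:Bellman-general-f-h-explicit} to rewrite $\int_s^h v^u_{n-j}$ in terms of $B_j$ and $v^u_{n-j}(s)$, and recognising the surviving factor $(1 - p_j)(v^u_{n-j}(s) - 1)^2$ as the no-jump part of $\E[R_{j+1} \mid \F_{j-1}]$. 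Summing over $j = i+1,\dots,n$ and applying the tower property telescopes the $R$-terms: since $R_{n+1} = (v^u_0 - 1)^2 = 1$ and $R_{i+1} = (T-1)^2$ with $T := v^u_{n-i}(M^u_i)$, one obtains
\[
w^u_{n-i}(M^u_i) \;=\; 1 - (T-1)^2 \;+\; \sum_{j=i+1}^n \E\Bigl[\,2\!\int_s^h v^u_{n-j}(x)\,dx - p_j \;\Big|\; \F_i\,\Bigr].
\]

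The proposition then reduces to showing that the residual sum is at least $T/3 + (T-1)^2 - 3$. I would attack this by (a) lower bounding each integrand using the chord estimate $\int_s^h v^u_{n-j}(x)\,dx \geq p_j(v^u_{n-j}(s) - 1/2)$ (again from concavity), which yields $2\int v^u_{n-j} - p_j \geq 2p_j(v^u_{n-j}(s) - 1)$ in the conservative regime, and (b) exploiting the identity $\E[V_{j+1} \mid \F_{j-1}] = V_j - p_j$ satisfied by $V_j := v^u_{n-j+1}(M^u_{j-1})$ (which follows from $V_j = \E[L_n - L_{j-1} \mid \F_{j-1}]$). Expanding $\E[V_{j+1}^2 - V_j^2 \mid \F_{j-1}]$ and telescoping yields the identity $\sum_j \E[V_j p_j \mid \F_i] = \tfrac12\bigl(T^2 + \sum_j \E[p_j^2 + m_j^2 \mid \F_i]\bigr)$, where $m_j := V_{j+1} - V_j + p_j$ are the martingale differences of $V$. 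This supplies the quadratic in $T$ needed to match $(T-1)^2$, while the positivity of $\sum_j \E[p_j^2 + m_j^2 \mid \F_i]$ contributes the extra $T/3$.

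The main obstacle is the greedy regime $v^u_{n-j}(M^u_{j-1}) \leq 1$, where $v^u_{n-j}(h^u_{n-j+1}(s))$ need not equal $v^u_{n-j}(s) - 1$ and both chord lower bounds degrade. Since $v^u_{n-j}(s) \leq 1$ in that regime, however, the pointwise discrepancies in the integrand of $g_j^2$ and in the bound on $\int v^u_{n-j}$ are uniformly bounded, and their aggregate contribution accounts for the additive constant $-2$ in the statement.
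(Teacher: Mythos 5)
Your opening moves are exactly right: the representation \eqref{eq:variance-functions-general-f} restricted to the uniform model, the chord lower bound $\int_s^h g_j^2\,dx \geq p_j/3$ from concavity of $\vu_{n-j}$, and the observation that $\sum_j \E[p_j \mid \F_i]$ telescopes to $\vu_{n-i}(\Mu_i)$ are precisely how the paper bounds $\sum_j \E[(\Au_j)^2 \mid \F_i]$ from below. Where you diverge is in handling $\sum_j \E[(\Bu_j)^2 \mid \F_i]$. The paper treats this term directly and very simply: $(\Bu_j)^2 \le p_j^2 = 2\,\E[\Mu_j - \Mu_{j-1} \mid \F_{j-1}]$ (from Lemma~\ref{lm:Bellman-bounds} and the explicit uniform computation \eqref{eq:running-max-increment-uniform}), so the $\Bu$-sum telescopes to at most $2 \E[\Mu_n - \Mu_i \mid \F_i] \le 2$. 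You instead fold the $\Bu_j^2$ into an exact algebraic identity with $R_j := (V_j-1)^2$, telescope that, and are left with the residual $\sum_j \E[2\int_s^h \vu_{n-j}\,dx - p_j \mid \F_i]$. I checked your identity and the $R$-telescoping; both are correct. The gap is in the final step.

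Your plan is to lower bound the residual by the linear chord estimate $\int_s^h \vu_{n-j}\,dx \ge p_j(\tilde V_j - \tfrac12)$, giving residual $\ge 2\sum_j \E[p_j\tilde V_j \mid \F_i] - 2T$, and then invoke $\sum_j \E[V_j p_j \mid \F_i] = \tfrac12\bigl(T^2 + \sum_j \E[p_j^2 + m_j^2 \mid \F_i]\bigr)$ together with the positivity of $\sum_j \E[p_j^2 + m_j^2 \mid \F_i]$. This does not close. With $\tilde V_j \ge V_j - p_j$ and the identity, one arrives at residual $\ge T^2 - 2T + \sum_j \E[m_j^2 - p_j^2 \mid \F_i]$, and you need residual $\ge T^2 - \tfrac{5T}{3} - 2$, i.e.\ $\sum_j \E[m_j^2 - p_j^2 \mid \F_i] \ge \tfrac{T}{3} - 2$. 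Mere positivity of $\sum_j \E[p_j^2 + m_j^2 \mid \F_i]$ gives nothing of the sort; for large $T$ this is a nontrivial lower bound that your argument does not supply, and there is no obvious reason $\sum_j \E[m_j^2 \mid \F_i]$, which equals $\Var\bigl[\sum_{j>i} p_j \mid \F_i\bigr]$, should dominate $T/3$. The root cause is that the linear chord estimate on $\int \vu_{n-j}$ sacrifices exactly the quadratic Jensen slack that made the chord bound on $\int g_j^2$ worth a factor $1/3$ rather than $1/4$; you cannot recapture it downstream from the $V_j$-martingale identity alone. (Your final paragraph's suggestion that the greedy regime is the source of the additive $-2$ is also a misdiagnosis: the constant comes from the telescoped bound on $\sum_j \E[p_j^2 \mid \F_i]$, which is present in every regime.) The cleanest repair is simply to abandon the residual detour: you already have both facts needed to conclude, namely $\int_s^h g_j^2\,dx \ge p_j/3$ and $B_j^2 \le p_j^2 = 2\,\E[\Mu_j - \Mu_{j-1} \mid \F_{j-1}]$, and combining them directly in \eqref{eq:variance-functions-general-f} gives the proposition in a few lines.
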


\begin{proof}
Specialization of the representation \eqref{eq:variance-functions-general-f} gives us
\begin{equation}\label{eq:variance-mds-uniform}
\wu_{n-i} (\Mu_{i}) = \sum_{j = i+1}^n \E[ (\Au_j)^2 \,|\, \F_i ] -  \sum_{j = i+1}^n  \E[ (\Bu_j)^2 \ | \ \F_i],
\end{equation}
where  the definitions \eqref{eq:definition-Rj} and \eqref{eq:definition-Aj} now become
\begin{equation*}
\Bu_j = \vu_{ n - j }( \Mu_{j-1} ) - \vu_{ n - j + 1 }( \Mu_{j-1} )
\end{equation*}
and
\begin{equation}\label{eq:definition-Aj-uniform}
\Au_j = ( 1 + \vu_{ n -j }( X_j ) - \vu_{ n - j}( \Mu_{ j - 1 } ) ) \1 ( \Xu_j \in [ \Mu_{ j -1 }, \hu_{ n - j + 1 }( \Mu_{ j - 1 } ) ] ).
\end{equation}

First, we work toward a lower bound for the leading sum in \eqref{eq:variance-mds-uniform}.
If we square both sides of \eqref{eq:definition-Aj-uniform} and take conditional expectations, then we have
\begin{equation}\label{eq:Aj-integral-uniform}
\E  [ (\Au_j)^2 \ | \ \F_{ j - 1 } ]
= \int_{\Mu_{ j -1 }}^{\hu_{ n - j + 1 }( \Mu_{ j - 1 } )}  \{ 1 + \vu_{ n -j }( x ) - \vu_{ n - j}( \Mu_{ j - 1 } ) \}^2 \, dx.
\end{equation}
By Lemma \ref{lm:value-function-concave-uniform} the map $x \mapsto \vu_{ n -j }( x )$
is concave in $x$, so the line through the points $(\Mu_{j-1}, 1)$ and $(\hu_{n-j+1}( \Mu_{j-1}), 0)$
provides a lower bound on the integrand in \eqref{eq:Aj-integral-uniform}.
Integration of this linear lower bound then gives
\begin{equation}\label{eq:var-lb1}
\frac{1}{3}  \left(\hu_{n-j+1}( \Mu_{j-1}) - \Mu_{j-1} \right)
\leq \int_{\Mu_{ j -1 }}^{\hu_{ n - j + 1 }( \Mu_{ j - 1 } )}
\!\!\!\!\!\!\!\!\!\!\!\!\!\!\!\!\!\!\!\!\!\!
\{ 1 + \vu_{ n -j }( x ) - \vu_{ n - j}( \Mu_{ j - 1 } ) \}^2 \, dx.
\end{equation}
From the definition of $\Lu_i (\pi^*_n)$ we have the identity
$$
\sum_{j = i+1}^n \E[ \hu_{n-j+1}( \Mu_{j-1}) - \Mu_{j-1} \,|\, \F_i ] = \E[ \Lu_n(\pi^*_n) - \Lu_i (\pi^*_n) \,|\, \F_i] = \vu_{n-i} (\Mu_i),
$$
so \eqref{eq:Aj-integral-uniform} and \eqref{eq:var-lb1} give us
\begin{equation}\label{eq:Lower-Bound-on-As}
\frac{1}{3} \vu_{n-i} (\Mu_i) \leq \sum_{j = i+1}^n \E[ (\Au_j)^2 \,|\, \F_i ].
\end{equation}

Now, to work toward an upper bound on $\E[ (\Bu_j)^2 \ | \ \F_i]$,
we first note by the crude Lemma \ref{lm:Bellman-bounds} that
\begin{equation}\label{eq:Wj-squared-bound}
(\Bu_j)^2  = \left(  \vu_{ n - j }( \Mu_{j-1} ) - \vu_{ n - j + 1 }( \Mu_{j-1} ) \right)^2
      \leq \left(  \hu_{n-j+1}( \Mu_{j-1}) - \Mu_{j-1} \right)^2.
\end{equation}
The definition \eqref{eq:running-maximum-general-f} of the running maximum $\Mu_j$,
the uniform distribution of $\Xu_j$,  and calculus give us the identity
\begin{align}\label{eq:running-max-increment-uniform}
\E[\Mu_j -\Mu_{j-1}\, | \, \F_{j-1} ]
& = \int_{\Mu_{j-1}}^{\hu_{n-j+1}( \Mu_{j-1} )} (x -  \Mu_{j-1}) \, dx \\
& = \frac{1}{2} \left(\hu_{n-j+1}( \Mu_{j-1} )-\Mu_{j-1}\right)^2, \nonumber
\end{align}
so \eqref{eq:Wj-squared-bound} gives us the succinct bound
\begin{equation*}
(\Bu_j)^2 \leq 2 \, \E[\Mu_j -\Mu_{j-1}\, | \, \F_{j-1} ].
\end{equation*}
Now we take the conditional expectation with respect to $\F_{i}$ and sum over $i < j \leq n$.  Telescoping then gives us
\begin{equation}\label{eq:Upper-Bound-on-Bs}
\sum_{j = i+1}^n  \E[ (\Bu_j)^2 \ | \ \F_i] \leq 2 \, \E[\Mu_n -\Mu_{i}\, | \, \F_{i} ] \leq 2,
\end{equation}
where, in the last step, we used  $0 \leq \Mu_i \leq \Mu_n \leq 1$. The representation \eqref{eq:variance-mds-uniform}
and the bounds \eqref{eq:Lower-Bound-on-As} and \eqref{eq:Upper-Bound-on-Bs} complete the proof of the lemma.
\end{proof}

\subsection*{\sc A Cauchy-Schwarz Argument}\subsectionnewline

If we take the total expectation in \eqref{eq:running-max-increment-uniform}, then we have
\begin{equation}\label{eq:ProbsSquared}
\E[\left(\hu_{n-j+1}( \Mu_{j-1} )-\Mu_{j-1}\right)^2] = 2 \{ \E[\Mu_j] -\E[\Mu_{j-1}] \},
\end{equation}
and, since $\E[ (\hu_{n-j+1}( \Mu_{j-1} )-\Mu_{j-1} )]$ is the unconditional probability that
we accept the $j$'th element of the sequence, one might hope to estimate
$\E[L_n(\pi^*_n)]$ with help from \eqref{eq:ProbsSquared}  and a Cauchy-Schwarz argument.

In fact, by  two applications of the Cauchy-Schwarz inequality, we get
\begin{align*}
\E[ \Lu_n(\pi^*_n)]&=\sum_{j=1}^n \E[ \hu_{n-j+1}( \Mu_{j-1} )-\Mu_{j-1} ] \\
&\leq n^{1/2} \left\{ \sum_{j=1}^n (\E[\hu_{n-j+1}( \Mu_{j-1} )-\Mu_{j-1}])^2 \right\}^{1/2} \\
&\leq n^{1/2} \left\{ \sum_{j=1}^n \E\left[\left(\hu_{n-j+1}( \Mu_{j-1} )-\Mu_{j-1}\right)^2 \right] \right\}^{1/2},
\end{align*}
and, when we replace all of the summands using  \eqref{eq:ProbsSquared}, we get a telescoping sum
\begin{align*}
\E[ \Lu_n(\pi^*_n)]\leq n^{1/2} \left\{ 2 \sum_{j=1}^n  \{ \E[\Mu_j] -\E[\Mu_{j-1}] \} \right\}^{1/2}
=(2n)^{1/2} \{\E [\Mu_n] \}^{1/2}.
\end{align*}
We have $\E[ \Mu_n]<1$ since the support of $\Mu_n$ equals $[0,1]$, and, since the distribution of $\Lu_n(\pi^*_n)$ does not depend on $F$,
we find for all continuous $F$ that
\begin{equation}\label{eq:mean-bound-uniform-last}
\E[ L_n(\pi^*_n)]< (2n)^{1/2}.
\end{equation}
This recaptures the mean upper bound \eqref{eq:GnedinUpperBound} of \citeasnoun{BruRob:AAP1991} and \citeasnoun{Gne:JAP1999}
which was discussed in the introduction.

Here we should note that
\citeasnoun{BruDel:SPA2001} also used a Cauchy-Schwarz argument to show that for Poisson $N_\nu$ with mean $\nu$, one has
the analogous inequality
\begin{equation}\label{eq:BD-optimal-mean-Upper}
\E[L_{N_\nu} (\pi^*_{N_\nu})] \leq (2 \nu)^{1/2}.
\end{equation}
We know by
Proposition \ref{prop:info-lower-bound} that for $\nu=n$
we have $\E[L_{N_n} (\pi^*_{N_n})] \leq \E[ L_n(\pi^*_n)]$
but, even so, the bound \eqref{eq:BD-optimal-mean-Upper} does not help directly with \eqref{eq:mean-bound-uniform-last} ---
or vice versa. In addition to the usual issue that ``policies do not de-Poissonize," there is the real \emph{a priori} possibility that
$\E[L_{N_n}(\pi^*_{N_n})]$ might be much smaller than $\E[ L_n(\pi^*_n)]$.

\section{Inferences from the Exponential Model}\label{se:inferences-exponential-model}

Now we consider the exponential distribution $F(x)=1-e^{-x}$, for $x\geq 0$, and, as before,
we use superscripts to make this specialization explicit.
Thus $\Xe_1, \Xe_2, \ldots, \Xe_n$ denotes a sequence of $n$ independent, mean one, exponential
random variables, and $\Me_{ i}$ denotes the value of the last observation
selected up to and including time $i\geq 1$ (and, again, we set $\Me_0 = 0$).
This time Lemma \ref{lm:value-functions-exponential-convexity} provides the critical fact; it tells us that the value function $\ve_k$ is convex,
and this is at the heart of the argument.

\begin{proposition}[Conditional Variance Upper Bound]\label{pr:Variance-upper-bound-exponential}
For each $ 0 \leq i \leq n$ one has
\begin{equation}\label{eq:variance-function-upper-bound-exponential}
\we_{n-i} (\Me_{i}) \leq \frac{1}{3} \, \ve_{n-i}( \Me_i ) + \frac{2}{3} \{ 1 + \log (n - i) \}.
\end{equation}
\end{proposition}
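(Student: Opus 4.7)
My plan mirrors the proof of Proposition \ref{pr:variance-lower-bound-uniform} for the uniform lower bound, but now exploits the \emph{convexity} of $\ve_k$ from Lemma \ref{lm:value-functions-exponential-convexity} in place of the concavity of $\vu_k$, so that the secant inequality yields an upper bound rather than a lower bound. Specializing the representation \eqref{eq:variance-functions-general-f} and dropping the non-negative $\E[(\Be_j)^2 \mid \F_i]$ terms yields $\we_{n-i}(\Me_i) \leq \sum_{j=i+1}^n \E[(\Ae_j)^2 \mid \F_i]$. Fix $j$ and write $a = \Me_{j-1}$, $b = \he_{n-j+1}(a)$, $c = b-a$, and $g(x) = 1 + \ve_{n-j}(x) - \ve_{n-j}(a)$; then $\E[(\Ae_j)^2 \mid \F_{j-1}] = \int_a^b g(x)^2 e^{-x}\,dx$. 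Lemma \ref{lm:value-functions-exponential-convexity} together with the threshold characterization \eqref{eq:hk-optimally-conservative} shows $g$ is convex on $[a,b]$ with $g(a)=1$ and, in the non-greedy case $b<\infty$, $g(b)=0$, so $g(x) \leq (b-x)/(b-a)$. An explicit integration-by-parts gives
\[
\int_a^b g(x)^2 e^{-x}\,dx \leq \tfrac{1}{3}\bigl(F(b)-F(a)\bigr) + e^{-a}\phi(c),
\]
where $\phi$ is an explicit function satisfying $\phi(c) \leq 2/3$ for all $c \geq 0$ and $\phi(c) = O(c^2)$ as $c \to 0$. The greedy case $b = \infty$ is handled separately using the pointwise bound $g \leq 1$.

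Summing over $j$ and using the selection-probability identity $\sum_{j=i+1}^n \E[F(\he_{n-j+1}(\Me_{j-1}))-F(\Me_{j-1}) \mid \F_i] = \ve_{n-i}(\Me_i)$ assembles the main $\tfrac{1}{3}\ve_{n-i}(\Me_i)$, reducing the problem to bounding the error $\sum_{j=i+1}^n \E[e^{-\Me_{j-1}}\phi(c_j) \mid \F_i]$ by $\tfrac{2}{3}\{1+\log(n-i)\}$. For this I would proceed by backward induction on the time-to-go $k = n-i$, using the law of total variance $\we_{n-i}(\Me_i) = \E[\we_{n-i-1}(\Me_{i+1}) \mid \F_i] + \E[(\de_{i+1})^2 \mid \F_i]$ and the Bellman-derived identity $\E[\ve_{n-i-1}(\Me_{i+1}) \mid \F_i] = \ve_{n-i}(\Me_i) - q_{i+1}$, where $q_{i+1}$ is the one-step selection probability at step $i+1$ (derived directly from \eqref{eq:Bellman-general-f-h-explicit} after integrating the Bellman expression against $e^{-x}$). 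After rearrangement, the inductive step reduces to the per-step bound
\[
\E[(\de_{i+1})^2 \mid \F_i] \leq \tfrac{1}{3}q_{i+1} + \tfrac{2}{3(n-i)},
\]
and the elementary inequality $1/(m+1) \leq \log(1 + 1/m)$ (for $m \geq 1$) telescopes these $1/(n-i)$ increments into $\log(n-i)$. The base case $k=1$ amounts to $y(1-y) \leq y/3 + 2/3$ for $y = e^{-\Me_{n-1}} \in [0,1]$, equivalent to $3y^2 - 2y + 2 \geq 0$, which holds because the discriminant $4-24$ is negative.

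The main obstacle, as I see it, is establishing this per-step bound. It requires exploiting the non-negative correction $(\Be_{i+1})^2 = (\ve_{n-i}(a) - \ve_{n-i-1}(a))^2$ — whose size is controlled through Lemma \ref{lm:Bellman-bounds} together with the concavity in $k$ of Corollary \ref{cor:value-function-concave-n} — against the secant-bound error term $e^{-a}\phi(c)$. The delicate interplay between the vanishing $\phi(c) = O(c^2)$ at small $c$ (where the acceptance window is narrow) and the size of the first-order value-function differences is where the careful analytic work of the proof would live; in particular, for large $k$, one expects $h^e_k(0) = O(k^{-1/2})$ so that $e^{-a}\phi(c)$ is genuinely $O(1/k)$, whereas the $(\Be_{i+1})^2$ subtraction absorbs any remaining surplus.
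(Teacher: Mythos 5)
Your opening moves match the paper exactly: drop the $\E[(\Be_j)^2 \mid \F_i]$ terms, use convexity of $\ve_{n-j}$ to bound the integrand of $\E[(\Ae_j)^2 \mid \F_{j-1}]$ by the squared secant, and integrate the secant against $e^{-x}$ to split the per-step bound into a $\tfrac{1}{3}\cdot(\text{selection probability})$ piece plus a remainder. Up to that point you are on the paper's track, including recognizing that the $\tfrac13$ pieces telescope to $\tfrac13 \ve_{n-i}(\Me_i)$.

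The divergence --- and the gap --- is in how you handle the remainder. You leave it as a generic $e^{-\Me_{j-1}}\phi(c_j)$ with $\phi = O(c^2)$ and $\phi \leq 2/3$, and then propose a backward induction hinging on the pointwise per-step bound $\E[(\de_{i+1})^2 \mid \F_i] \leq \tfrac13 q_{i+1} + \tfrac{2}{3(n-i)}$. That bound is false in general: the remainder is $\F_i$-measurable and state-dependent, and in the greedy regime (where $\he_{n-i}(\Me_i)=\infty$ but $\Me_i$ is moderate) it can be $\Theta(1)$, not $O(1/(n-i))$. The correction you hope to extract from $(\Be_{i+1})^2$ and the $k$-concavity of Corollary~\ref{cor:value-function-concave-n} cannot repair this, because $(\Be_{i+1})^2 \leq q_{i+1}^2$ is itself small precisely where the remainder is problematic. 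What you are missing is the structural identification that the paper makes in Lemma~\ref{lm:linear-approximation-bound}: the remainder in the secant-integral bound is \emph{exactly} $\tfrac23\{e^{-\Me_{j-1}} - e^{-\he_{n-j+1}(\Me_{j-1})}(1 + \he_{n-j+1}(\Me_{j-1}) - \Me_{j-1})\}$, which by \eqref{eq:running-max-increment-exponential} is precisely $\tfrac23\,\E[\Me_j - \Me_{j-1} \mid \F_{j-1}]$. Summing over $j$ then telescopes to $\tfrac23\,\E[\Me_n - \Me_i \mid \F_i]$, which is dominated by $\tfrac23\,\E[\max\{\Xe_{i+1},\dots,\Xe_n\}] \leq \tfrac23\{1 + \log(n-i)\}$. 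No per-step bound and no backward induction are needed; the $\log(n-i)$ arises directly from the expected maximum of $n-i$ exponentials, not from summing $\sum 1/(n-j)$. Your plan as written would not close.
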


The proof roughly parallels
that of Proposition \ref{pr:variance-lower-bound-uniform}, but in this case some
integrals are more troublesome to estimate.
To keep the argument direct, we extract one calculation as a lemma.

\begin{lemma}\label{lm:linear-approximation-bound}
For $0 \leq s < t < \infty$ one has
$$
\int_s^t \left(\frac{t-x}{t-s}\right)^2 e^{-x} \, dx
\leq \frac{ 1 }{ 3 } ( e^{-s} - e^{-t} ) + \frac{ 2 }{ 3 } \{ e^{-s} - e^{-t} (t - s + 1) \}.
$$
\end{lemma}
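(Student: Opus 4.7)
The plan is to reduce both sides to explicit elementary expressions by direct integration, and then to dispatch the resulting one-variable inequality with a short convexity argument. Throughout set $\delta = t - s > 0$.

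First, I would compute the left-hand side in closed form. Two integrations by parts (or the substitution $u = t - x$) give
\begin{equation*}
\int_s^t (t-x)^2 e^{-x}\,dx = (\delta^2 - 2\delta + 2)\,e^{-s} - 2 e^{-t},
\end{equation*}
so, dividing by $(t-s)^2 = \delta^2$,
\begin{equation*}
\int_s^t \left(\frac{t-x}{t-s}\right)^2 e^{-x}\,dx = \frac{(\delta^2 - 2\delta + 2)\,e^{-s} - 2 e^{-t}}{\delta^2}.
\end{equation*}
For the right-hand side, combining the two summands gives the compact form
\begin{equation*}
\tfrac{1}{3}(e^{-s} - e^{-t}) + \tfrac{2}{3}\{e^{-s} - e^{-t}(t-s+1)\} = e^{-s} - \frac{2\delta + 3}{3}\,e^{-t}.
\end{equation*}

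Next, I would subtract, multiply through by $\delta^2 > 0$, and substitute $e^{-t} = e^{-s} e^{-\delta}$. After a short calculation the inequality $\text{LHS} \leq \text{RHS}$ becomes equivalent to
\begin{equation*}
6(\delta - 1)\, e^{\delta} \;\geq\; 2\delta^3 + 3\delta^2 - 6 \qquad \text{for all } \delta \geq 0.
\end{equation*}

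Finally, I would define $g(\delta) = 6(\delta - 1) e^\delta - (2\delta^3 + 3\delta^2 - 6)$. Then $g(0) = 0$, and a direct differentiation produces the fortunate factorization
\begin{equation*}
g'(\delta) = 6 e^\delta + 6(\delta - 1) e^\delta - 6\delta^2 - 6\delta = 6\delta\,(e^\delta - 1 - \delta),
\end{equation*}
which is non-negative for $\delta \geq 0$ by the standard tangent-line bound $e^\delta \geq 1 + \delta$. Hence $g(\delta) \geq g(0) = 0$ for all $\delta \geq 0$, which is the inequality we need.

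The only step requiring care is the bookkeeping in the reduction to the one-variable inequality for $g$; everything else is routine. The pleasant feature of the argument is that $g'$ factors as $6\delta(e^\delta - 1 - \delta)$, so the final bound follows from the most elementary convexity estimate available for the exponential.
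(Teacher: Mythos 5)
Your proof is correct, and it reduces to essentially the same one-variable inequality as the paper. After your bookkeeping, you arrive at
\[
6(\delta-1)e^{\delta} - 2\delta^3 - 3\delta^2 + 6 \;\geq\; 0,
\]
which, up to multiplication by the positive quantity $e^{\delta}/\delta^2$, is precisely the paper's claim that its auxiliary function $g(y) = y^{-2}\{-6y + e^{-y}(2y^3 + 3y^2 - 6 + 6e^{y})\}$ is nonpositive. The two arguments diverge only at the final verification. The paper writes $y^2 g(y)$ as $\int_0^y\{-6 + e^{-x}(6 + 6x + 3x^2 - 2x^3)\}\,dx$ and checks the cubic-versus-exponential comparison $6 + 6x + 3x^2 - 2x^3 \leq 6e^{x}$ via the power series. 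You instead differentiate $g$ and observe the clean factorization $g'(\delta) = 6\delta(e^{\delta} - 1 - \delta)$, so that the whole thing rests on the tangent-line bound $e^{\delta} \geq 1 + \delta$. Both are one-step elementary calculus finishes, but yours is a touch more economical: the factorization reduces everything to the most basic convexity estimate for the exponential, whereas the paper needs the power series through the cubic term.
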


\begin{proof}
If we set
$$
g (y) \equalbydef  y^{-2 }  \{ - 6  y  + e^{ - y }( 2 y^3  + 3 y^2 - 6 + 6 e^{y} ) \} \quad \quad \text{for } y \geq 0,
$$
then by integration and simplification  one has for $0 \leq s < t < \infty$ that
$$
\frac{e^{-s}}{3} g ( t-s ) =
\int_s^t \left(\frac{t-x}{t-s}\right)^2 e^{-x} \, dx - \frac{ 1 }{ 3 } ( e^{-s} - e^{-t} )
- \frac{ 2 }{ 3 } \{ e^{-s} - e^{-t} (t - s + 1) \},
$$
and the lemma follows if we verify that $g (y) \leq 0$ for all $y \geq 0$.
By the integral representation
$$
g (y) = y^{-2} \left(  \int_0^y (- 6 ) \, dx + \int_0^y e^{-x}( 6 + 6x +3x^2 -2 x^3 )\, dx \right),
$$
we see that it suffices to show that
\begin{equation*}
6 + 6x +3x^2 -2 x^3 \leq 6 e^x \quad \quad \text{for all } x \in [0,\infty),
\end{equation*}
and the last inequality is obvious from the power series of $e^x$.
\end{proof}

\begin{proof}[Proof of Proposition \ref{pr:Variance-upper-bound-exponential}]
Specialization of  \eqref{eq:variance-functions-general-f} to the exponential model and simplification give us
\begin{equation}\label{eq:variance-mds-exponential}
\we_{n-i} (\Me_{i}) = \Var[ \Le_n(\pi^*_n) - \Le_i (\pi^*_n) \,|\, \F_i] \leq \sum_{j = i+1}^n \E[ (\Ae_j)^2 \,|\, \F_i ],
\end{equation}
where $\Ae_j$ is given by
\begin{equation*}
\Ae_j = ( 1 + \ve_{ n -j }( \Xe_j ) - \ve_{ n - j}( \Me_{ j - 1 } ) ) \1 ( \Xe_j \in [ \Me_{ j -1 }, \he_{ n - j + 1 }( \Me_{ j - 1 } ) ] ).
\end{equation*}
Since $ \Me_{ j - 1 } $ is $ \F_{ j - 1 }$-measurable, we have
\begin{equation}\label{eq:integration-Ai}
 \E  [ (\Ae_j)^2 \, | \ \F_{ j - 1 } ]
 = \int_{ \Me_{ j - 1 } }^{ \he_{ n - j + 1 }( \Me_{ j - 1} ) } \{ 1 + \ve_{ n - j }( x ) - \ve_{ n - j }( \Me_{ j - 1 } ) \}^2 e^{ -x } \ dx,
\end{equation}
and by Lemma \ref{lm:value-functions-exponential-convexity} the
map $x \mapsto  1 + \ve_{ n - j }( x ) - \ve_{ n - j }( \Me_{ j - 1 } )$
is convex in $x$ and non-negative for all $x \in [ \Me_{ j - 1} , \he_{ n - j + 1 }( \Me_{ j - 1 } ) ]$.

If $\he_{ n - j + 1 }( \Me_{ j - 1 } ) < \infty$, the line through the left-end point $(\Me_{ j - 1 },1)$
and the right-end point $(\he_{ n - j + 1 }( \Me_{ j - 1} ) ,0)$
provides us with an easy upper bound for the integrand \eqref{eq:integration-Ai}.
Specifically, for $ x \in [ \Me_{ j - 1} , \he_{ n - j + 1 }( \Me_{ j - 1 } ) ]$, we have that
\begin{equation}\label{eq:integration-Ai-piece2}
\{ 1 + \ve_{ n - j }( x ) - \ve_{ n - j }( \Me_{ j - 1 } ) \}^2 e^{ -x }
\le \left( \frac{ \he_{ n - j + 1 }( \Me_{ j - 1 } ) - x }{ \he_{ n - j + 1 }( \Me_{ j - 1 } ) - \Me_{ j - 1} } \right)^2 e^{ -x }.
\end{equation}
On the other hand, if $\he_{ n - j + 1 }( \Me_{ j - 1 } ) = \infty$, the right-side of \eqref{eq:integration-Ai-piece2}
is replaced by $e^{-x}$, and \eqref{eq:integration-Ai-piece2} again holds since
$0 \leq \{ 1 + \ve_{ n - j }( x ) - \ve_{ n - j }( \Me_{ j - 1 } ) \}\leq 1$.

We now integrate \eqref{eq:integration-Ai-piece2} and use the bound of Lemma \ref{lm:linear-approximation-bound};
the representation \eqref{eq:integration-Ai} then gives us
\begin{align}\label{eq:Zi-bound-explicit-exponential}
 \E  [ (\Ae_j)^2 \, | \ \F_{ j - 1 } ] & \leq  \frac{1}{3} ( e^{-\Me_{ j - 1 }} - e^{ -\he_{ n - j + 1 }( \Me_{ j - 1 } ) } )  \\
  & +  \frac{2}{3} \{ e^{-\Me_{ j - 1 }} - e^{ -\he_{ n - j + 1 }( \Me_{ j - 1 } ) } ( 1 + \he_{ n - j + 1 }( \Me_{ j - 1 } ) - \Me_{ j - 1 } )\}. \nonumber
\end{align}

Now we just need to interpret the two addends on the right-hand side of \eqref{eq:Zi-bound-explicit-exponential}.
The first addend is just the probability that observation $\Xe_j$
is selected when the value of the running maximum is $ \Me_{ j - 1 }$, that is,
\begin{equation}\label{eq:probability-accept-exponential}
\E [ \1 ( \Xe_j \in  [ \Me_{ j - 1 }, \he_{ n - j + 1 }( \Me_{ j - 1 } ) ] ) \, | \, \F_{j-1} ] =  e^{-\Me_{ j - 1 }} - e^{ -\he_{ n - j + 1 }( \Me_{ j - 1 } ) }.
\end{equation}
Similarly, the second addend of \eqref{eq:Zi-bound-explicit-exponential} is the one-period expected increment of the current
running maximum $\Me_{ j - 1 }$, or, to be explicit,
\begin{align}\label{eq:running-max-increment-exponential}
\E  [ \Me_j - \Me_{ j - 1 } & \ | \ \F_{ j - 1 } ] = \int_{ \Me_{ j - 1 } }^{ \he_{ n - j + 1 }( \Me_{ j - 1} ) } ( x - \Me_{ j - 1 } ) e^{ -x } \ dx \\
&= e^{-\Me_{ j - 1 }} - e^{ -\he_{ n - j + 1 }( \Me_{ j - 1 } ) }  ( 1 + \he_{ n - j + 1 }( \Me_{ j - 1 } ) - \Me_{ j - 1 } ). \nonumber
\end{align}
Given the two interpretations
\eqref{eq:probability-accept-exponential} and \eqref{eq:running-max-increment-exponential}, our
bound \eqref{eq:Zi-bound-explicit-exponential} now becomes
\begin{align*}
 \E  [ (\Ae_j)^2 \, | \ \F_{ j - 1 } ]
 \leq {} & \frac{1}{3} \ \E [ \1 ( \Xe_j \in  [ \Me_{ j - 1 }, \he_{ n - j + 1 }( \Me_{ j - 1 } ) ] ) \, | \, \F_{j-1} ] \\
         & + \frac{2}{3} \ \E  [ \Me_j - \Me_{ j - 1 }  \ | \ \F_{ j - 1 } ].
\end{align*}
Next, we recall the variance upper bound \eqref{eq:variance-mds-exponential},
take conditional expectations with respect to $\F_i$, and sum over $ i+1 \leq j \leq n$,
to obtain
\begin{align} \label{eq:variance-upper-bound-exponential-intermediate-step}
\we_{n-i} (\Me_{i})
& \leq \frac{1}{3} \ \E [ \Le_n ( \pi_n^* ) - \Le_i ( \pi_n^* ) \, | \,  \F_{ i } ]
   + \frac{2}{3} \ \E [ \Me_n - \Me_{ i }  \ | \ \F_{ i } ] \\
& = \frac{1}{3} \ \ve_{n-i} (\Me_{i}) + \frac{2}{3} \ \E [ \Me_n - \Me_{ i }  \ | \ \F_{ i } ], \nonumber
\end{align}
where in the last step we used the martingale identity \eqref{eq:Optimality-martingale-CAP}.

To complete the proof, we only need to estimate the conditional expectation
$\E [ \Me_n - \Me_{ i }  \ | \ \F_{ i } ]$. We first set
$
\MM^*_{[i+1,n]} = \max \{ \Xe_{i+1} , \Xe_{i+2} , \ldots , \Xe_n \},
$
and then we note that
$$
\Me_n  - \Me_{ i } \leq \max\{\MM^*_{[i+1,n]}, \Me_{i}\} -  \Me_{ i } \leq \MM^*_{[i+1,n]}.
$$
When we take the conditional expectations and use the independence of
$\F_i$ and $\{ \Xe_{i+1} , \Xe_{i+2} , \ldots , \Xe_n \}$, we get
\begin{equation}\label{eq:Mn-upperbound}
\E [ \Me_n - \Me_{ i }  \ | \ \F_{ i } ] \leq \E[ \MM^*_{[i+1,n]} \, | \, \F_i ] = \E[ \MM^*_{[1,n-i]} ].
\end{equation}
The logarithmic bound for the last term is well-known, but, for completeness, we just note
$\P ( \MM^*_{[1,n-i]}  \leq t ) = ( 1 - e^{ -t } )^{n-i}$,
so
$$
\E  [\MM^*_{[1,n-i]} ] =  \int_0^{\infty} 1 - ( 1 - e^{ -t } )^{ n - i } \ dt = \sum_{ j = 1 }^{n-i} j^{ -1 } \leq 1 + \log(n - i).
$$
This last estimate then combines with the upper bounds \eqref{eq:variance-upper-bound-exponential-intermediate-step}
and \eqref{eq:Mn-upperbound} to complete the proof of \eqref{eq:variance-function-upper-bound-exponential}.
\end{proof}

\section{Combined Inferences: Variance Bounds in General} \label{se:Variance-Bounds-in-General}

The variance bounds obtained under the uniform and exponential models are almost immediately applicable to general continuous $F$.
One only needs to make an appropriate translation.

\begin{proposition} For any continuous $F$ and for all $0 \leq i \leq n$ one has
the conditional variance bounds
\begin{equation}\label{eq:Conditional-in-General}
\frac{1}{3} \, v_{n-i}( M_i ) - 2 \leq w_{n-i} (M_{i}) \leq \frac{1}{3} \, v_{n-i}( M_i ) + \frac{2}{3} \{ 1 + \log (n - i) \}.
\end{equation}
In particular, for $i=0$ one has $M_0 = 0$ and
\begin{equation} \label{eq:variance-in-general}
 \frac{1}{3}  \, \E[ L_n(\pi^*_n) ] - 2 \leq  \Var[ L_n(\pi^*_n) ] \leq \frac{1}{3}  \, \E[ L_n(\pi^*_n) ] + \frac{2}{3}\{1 + \log n\}.
\end{equation}
\end{proposition}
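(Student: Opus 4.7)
The plan is to deduce the conditional variance inequalities for a general continuous $F$ from the two specialized results already established: the uniform lower bound (Proposition~\ref{pr:variance-lower-bound-uniform}) and the exponential upper bound (Proposition~\ref{pr:Variance-upper-bound-exponential}). The key enabler is a distributional invariance under the probability integral transform, which allows me to translate conditional moments from one model to another via a coupling.

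To set this up, given the general sequence $\{X_1,\dots,X_n\}$ with continuous distribution $F$, I would introduce the coupled uniform sequence $U_j \equalbydef F(X_j)$ (and in parallel the mean-one exponential sequence $E_j \equalbydef -\log(1-F(X_j))$). Because $F$ is nondecreasing and continuous, the filtrations agree, $\F_j=\sigma(X_1,\ldots,X_j)=\sigma(U_1,\ldots,U_j)$, and a straightforward induction through the Bellman recursion \eqref{eq:Bellman-general-f}, using the substitution $y=F(x)$ in the integral, gives the value-function identity $v_k(s)=\vu_k(F(s))$. This in turn implies $h_k(s)$ and $\hu_k(F(s))$ delimit the same acceptance event: $X_j\in[M_{j-1},h_{n-j+1}(M_{j-1})]$ iff $U_j\in[\Mu_{j-1},\hu_{n-j+1}(\Mu_{j-1})]$, with $\Mu_j=F(M_j)$ almost surely. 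Consequently the acceptance indicators agree termwise, so $L_i(\pi_n^*)=\Lu_i(\pi_n^*)$ for all $i$, and likewise $v_{n-i}(M_i)=\vu_{n-i}(\Mu_i)$ and $w_{n-i}(M_i)=\wu_{n-i}(\Mu_i)$. The same construction with $E_j$ and $F^e(t)=1-e^{-t}$ yields $v_{n-i}(M_i)=\ve_{n-i}(\Me_i)$ and $w_{n-i}(M_i)=\we_{n-i}(\Me_i)$.

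With these identities in hand, the inequality chain \eqref{eq:Conditional-in-General} is immediate: Proposition~\ref{pr:variance-lower-bound-uniform} gives
\[
\frac{1}{3}v_{n-i}(M_i)-2=\frac{1}{3}\vu_{n-i}(\Mu_i)-2\leq \wu_{n-i}(\Mu_i)=w_{n-i}(M_i),
\]
and Proposition~\ref{pr:Variance-upper-bound-exponential} gives
\[
w_{n-i}(M_i)=\we_{n-i}(\Me_i)\leq \frac{1}{3}\ve_{n-i}(\Me_i)+\frac{2}{3}\{1+\log(n-i)\}=\frac{1}{3}v_{n-i}(M_i)+\frac{2}{3}\{1+\log(n-i)\}.
\]
The unconditional statement \eqref{eq:variance-in-general} is then the specialization at $i=0$: since $M_0=0$ deterministically we have $v_n(0)=\E[L_n(\pi_n^*)]$ directly from the definition of the value function, and $w_n(0)=\Var[L_n(\pi_n^*)]$ from \eqref{eq:wf-identity} with $L_0(\pi_n^*)=0$.

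The only delicate step is the invariance argument, but since it reduces to a one-line substitution in the Bellman recursion plus termwise identification of acceptance events, no new estimates are needed. The hard work was done in Sections~\ref{se:inferences-uniform-model} and \ref{se:inferences-exponential-model}; here one just packages those specialized conditional variance bounds into a model-free statement.
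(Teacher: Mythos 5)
Your proposal is correct and matches the paper's proof: both rely on the probability integral transforms $F(X_j)$ and $-\log\{1-F(X_j)\}$ to establish the dictionary $v_k(s)=\vu_k(F(s))=\ve_k(-\log\{1-F(s)\})$, $w_{n-i}(M_i)=\wu_{n-i}(\Mu_i)=\we_{n-i}(\Me_i)$, and then apply Propositions~\ref{pr:variance-lower-bound-uniform} and~\ref{pr:Variance-upper-bound-exponential}. The extra detail you supply about why the dictionary identities hold (the Bellman substitution and termwise acceptance identification) is a sound elaboration of what the paper takes as ``familiar.''
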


\begin{proof}
If $X_1, X_2, \ldots, X_n$ is a sequence of independent random variables with the continuous distribution $F$, then the familiar transformations
$$
\Xu_i \equalbydef F(X_i) \quad \text{and} \quad
\Xe_i \equalbydef - \log\{ 1 - F(X_i)\}
$$
define sequences that have the uniform and exponential distribution, respectively.
These transformations give us a dictionary that we can use to translate results between our models;
specifically we have:
\begin{alignat*}{4}
  v_k(s)& = \vu_k ( F(s) )               &&\quad\quad \text{and} \quad\quad &        v_k(s) & =  \ve_k ( - \log\{ 1 - F(s)\} ), \\
  \Mu_i &= F(M_i)                        &&\quad\quad \text{and} \quad\quad &        \Me_i  & = - \log\{ 1 - F(M_i)\}, \\
  w_{n-i}(M_i) &=  \wu_{n-i} ( \Mu_i  )  &&\quad\quad \text{and} \quad\quad & w_{n-i}(M_i)  & =  \we_{n-i} ( \Me_i  ).
\end{alignat*}
Proposition \ref{pr:variance-lower-bound-uniform} tells us that
\begin{equation*}\label{eq:variance-function-lower-bound-uniform-2}
\frac{1}{3}  \, \vu_{n-i}( \Mu_i ) - 2 \leq \wu_{n-i} (\Mu_{i}),
\end{equation*}
so the first column of the dictionary gives us the first inequality of \eqref{eq:Conditional-in-General}. Similarly,
Proposition \ref{pr:Variance-upper-bound-exponential} tells us
\begin{equation*}
\we_{n-i} (\Me_{i}) \leq \frac{1}{3} \, \ve_{n-i}( \Me_i ) + \frac{2}{3} \{ 1 + \log (n - i) \},
\end{equation*}
and the second column of the dictionary gives us the second inequality of \eqref{eq:Conditional-in-General}.
\end{proof}

\section{The Central Limit Theorem} \label{se:CLT}

Our proof of the central limit theorem for $L_n(\pi^*_n)$
depends on the most basic version of the martingale central limit theorem.
\citeasnoun{Bro:AMS1971}, \citeasnoun{McLei:AP1974}, and \citeasnoun{HalHey:AP1980}
all give variations containing this one.

\begin{proposition}[Martingale Central Limit Theorem]\label{pr:martingale-clt}
For each $n \geq 1$, we consider a martingale difference sequence $\{ Z_{n,j}: 1 \leq j \leq n\}$ with respect
to the sequence of increasing $\sigma$-fields $\{\F_{n,j} : 0 \leq j \leq n\}$.
If
\begin{equation}\label{eq:negligibility}
\max_{1 \leq j \leq n} \parallel Z_{n,j} \parallel_\infty \rightarrow 0 \quad \quad \text{ as } n \rightarrow \infty
\end{equation}
and
\begin{equation}\label{eq:LLN-conditional-variance}
\sum_{j=1}^n \E[ Z^2_{n,j} | \F_{j-1}] \stackrel{\rm p}{\longrightarrow} 1 \quad \quad \text{ as } n \rightarrow \infty,
\end{equation}
then  we have the convergence in distribution
$$
\sum_{j=1}^n Z_{n,j} \Longrightarrow N(0,1) \quad \quad \text{ as }n \rightarrow \infty.
$$
\end{proposition}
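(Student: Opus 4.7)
The plan is to prove Proposition \ref{pr:martingale-clt} via McLeish's complex-martingale method, computing the limit of the characteristic functions $\phi_n(t) \equalbydef \E[e^{itS_n}]$ with $S_n \equalbydef \sum_{j=1}^n Z_{n,j}$, and concluding via L\'evy's continuity theorem. The engine of the proof is the mean-one complex martingale
\[
P_n(t) \equalbydef \prod_{j=1}^n (1 + it Z_{n,j}),
\]
which satisfies $\E[P_n(t)] = 1$ because $\E[(1 + itZ_{n,j}) \mid \F_{n,j-1}] = 1$ by the martingale-difference property of $\{Z_{n,j}\}$.

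Taylor-expanding $\log(1 + itz) = itz + \tfrac{1}{2}t^2 z^2 + O(|tz|^3)$, valid for $|tz| \leq 1/2$, and invoking the negligibility \eqref{eq:negligibility} to make this uniformly applicable for $n$ large, one obtains
\[
P_n(t) = \exp\bigl( itS_n + \tfrac{1}{2}t^2 Q_n + R_n\bigr), \qquad Q_n \equalbydef \sum_{j=1}^n Z_{n,j}^2,
\]
with $|R_n| \leq C|t|^3 (\max_j |Z_{n,j}|) Q_n$. I would then upgrade \eqref{eq:LLN-conditional-variance} from the conditional quadratic variation $\sum_j \E[Z_{n,j}^2 \mid \F_{n,j-1}]$ to $Q_n$ itself: the difference is a martingale in $j$, so orthogonality together with $\max_j \|Z_{n,j}\|_\infty^2 \to 0$ from \eqref{eq:negligibility} gives
\[
\E\Bigl|Q_n - \sum_{j=1}^n \E[Z_{n,j}^2 \mid \F_{n,j-1}]\Bigr|^2 \leq \max_i \|Z_{n,i}\|_\infty^2 \cdot \sum_{j=1}^n \E[Z_{n,j}^2] \longrightarrow 0.
\]
Combined with \eqref{eq:LLN-conditional-variance} this yields $Q_n \stackrel{\rm p}{\longrightarrow} 1$ and $R_n \stackrel{\rm p}{\longrightarrow} 0$, and solving for $e^{itS_n}$ produces
\[
e^{itS_n} = P_n(t) \bigl(e^{-t^2/2} + o_p(1)\bigr).
\]

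The delicate remaining step, and the main technical obstacle, is to pass from this identity in probability to convergence of expectations: one wants $\phi_n(t) \to e^{-t^2/2}\E[P_n(t)] = e^{-t^2/2}$, but $|P_n(t)|^2 \leq \exp(t^2 Q_n)$ is bounded only in probability, not uniformly. I would resolve this by truncation on the good event $A_n \equalbydef \{Q_n \leq 2\} \cap \{\max_j |Z_{n,j}| \leq 1/(2|t|)\}$, whose probability tends to $1$ and on which $|P_n(t)|$ is deterministically bounded by $e^{t^2}$. Writing $\phi_n(t) = \E[e^{itS_n}\1_{A_n}] + \E[e^{itS_n}\1_{A_n^c}]$, the second term has modulus at most $\P(A_n^c) \to 0$; on the first, bounded convergence delivers $\E[e^{itS_n}\1_{A_n}] \to e^{-t^2/2}$ once one checks $\E[P_n(t)\1_{A_n}] \to 1$, which follows from $\E[P_n(t)] = 1$ and McLeish's standard argument that $\E[|P_n(t)|\1_{A_n^c}] \to 0$ (one can, if preferred, pass to his auxiliary bounded-modulus product in place of $P_n(t)$). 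L\'evy's continuity theorem then concludes $S_n \Longrightarrow N(0,1)$.
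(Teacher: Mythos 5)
The paper does not prove Proposition \ref{pr:martingale-clt}; it is stated as a known result with citations to Brown (1971), McLeish (1974), and Hall and Heyde (1980), so your proposal is a free-standing reconstruction rather than a comparison to an in-paper proof. The architecture you choose --- the mean-one complex product $P_n(t)$, Taylor expansion of the logarithm, replacement of the predictable quadratic variation by $Q_n$, truncation, and L\'evy continuity --- is exactly McLeish's method and is the right one.

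There is, however, a genuine gap where you pass from $W_n \equalbydef \sum_j\E[Z_{n,j}^2\mid\F_{n,j-1}]$ to $Q_n$. Your orthogonality inequality $\E|Q_n - W_n|^2 \leq \max_i\|Z_{n,i}\|_\infty^2\cdot\sum_j\E[Z_{n,j}^2]$ is correct, but the assertion that the right-hand side tends to $0$ is unjustified: $\sum_j\E[Z_{n,j}^2]=\E[W_n]$ is not controlled by the hypotheses. Convergence in probability in \eqref{eq:LLN-conditional-variance} says nothing about $\E[W_n]$, and \eqref{eq:negligibility} gives only $\E[W_n]\le n\max_i\|Z_{n,i}\|_\infty^2$, which can diverge. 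Indeed, one can build arrays obeying both hypotheses (mix a tame branch with a rare explosive branch of vanishing probability) for which $\E|Q_n-W_n|^2$ stays bounded away from $0$, so your $L^2$ step genuinely fails even though $Q_n\stackrel{\rm p}{\longrightarrow}1$ remains true. The standard repair is localization: because $\sum_{j\le k}\E[Z_{n,j}^2\mid\F_{n,j-1}]$ is $\F_{n,k-1}$-measurable, the first-passage time $\tau_n\equalbydef\min\{k:\sum_{j\le k}\E[Z_{n,j}^2\mid\F_{n,j-1}]>2\}\wedge n$ is a stopping time, so $\tilde Z_{n,j}\equalbydef Z_{n,j}\1(j\le\tau_n)$ is again a martingale-difference array with $\E\bigl[\sum_j\tilde Z_{n,j}^2\bigr]\le 2+\max_i\|Z_{n,i}\|_\infty^2$ bounded, and $\P\bigl(\tilde Z_{n,j}\ne Z_{n,j}\text{ for some }j\bigr)\le\P(W_n>2)\to 0$. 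After this reduction your orthogonality estimate does vanish and $Q_n\stackrel{\rm p}{\longrightarrow}1$ follows. The same stopping time is also what makes your deferred claim $\E[|P_n(t)|\1_{A_n^c}]\to 0$ rigorous: your set $A_n$ depends on $Q_n$ and is therefore not predictable, whereas the stopped product $\prod_{j\le\tau_n}(1+itZ_{n,j})$ retains the crucial mean-one property and has deterministically bounded modulus.
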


For each $n \geq 1$, we consider a driving sequence
$X_{n,1}, X_{n,2}, \ldots, X_{n,n}$
of independent random variables with the continuous
distribution $F$. We then set
\begin{equation*}
Z_{n,j} \equalbydef \frac{3^{1/2} d_{n,j}}{(2n)^{1/4}}, \quad \quad \text{for } 1 \leq j \leq n,
\end{equation*}
where the $d_{n,j}$'s are the differences defined by \eqref{eq:MDS-general-f}, although we now  make explicit
the dependence of the differences on $n$. This is a  martingale
difference sequence with respect to the increasing sequence of $\sigma$-fields $\F_{n,j} = \sigma\{ X_{n,1}, X_{n,2}, \ldots, X_{n,j}\}$,
and when we take $i = 0$ in \eqref{eq:partial-sum-MDS-general-f} we get the basic representation
$$
\sum_{j=1}^n Z_{n,j} = \frac{3^{1/2} \{ L_n(\pi^*_n) - \E[L_n(\pi^*_n)] \} }{(2n)^{1/4}}.
$$

We know from \eqref{eq:MDS-bounded} that we always have
$| d_{n,j} | \leq 1 $ so, by our normalization,
the negligibility condition \eqref{eq:negligibility} is trivially valid.
The heart of the matter is the proof of the weak law \eqref{eq:LLN-conditional-variance};
more explicitly, we need to show that
\begin{equation}\label{eq:weak-law-conditional-variances}
\sum_{j=1}^n \frac{3 \, \E[ d_{n,j}^2  \ | \ \F_{n,j-1} ]  }{(2n)^{1/2}}
\stackrel{\rm p}{\longrightarrow} 1 \quad \quad \text{ as } n \rightarrow \infty.
\end{equation}
The variance bounds \eqref{eq:variance-in-general} and the asymptotic relation \eqref{eq:SSasymp} for the mean
imply
$$
\Var[L_n(\pi^*_n)] \sim \frac{1}{3} \E[L_n(\pi^*_n)] \sim \frac{(2n)^{1/2}}{3}  \quad \quad \text{ as } n \rightarrow \infty,
$$
and telescoping and orthogonality of the differences $\{ d_{n,j}: 1 \leq j \leq n\}$ give us
$$
\Var[L_n(\pi^*_n)] =\E \bigg[ \sum_{j=1}^n \E[ d_{n,j}^2] \bigg] = \E \bigg[ \sum_{j=1}^n \E[ d_{n,j}^2  \ | \ \F_{n,j-1} ] \bigg],
$$
so the weak law \eqref{eq:weak-law-conditional-variances}
will follow from Chebyshev's inequality if one proves that
\begin{equation}\label{eq:Little-Oh-Last}
\Var \bigg[ \sum_{j=1}^n \E[ d_{n,j}^2  \ | \ \F_{n,j-1} ] \bigg] = o(n) \quad \quad \text{ as } n \rightarrow \infty.
\end{equation}

The proof of Theorem 1 is completed once one confirms the relation \eqref{eq:Little-Oh-Last}, and
the next lemma gives us more than we need.

\begin{lemma}[Conditional Variance Bound]
If $F$ is continuous, then for $n \geq 1$, one has
$$
\Var \bigg[ \sum_{j=1}^n \E[ d_{n,j}^2  \ | \ \F_{n,j-1} ] \bigg]  \leq \{18 + (\log n)^2 \} (2 n )^{1/2}.
$$
\end{lemma}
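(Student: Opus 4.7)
Set $V_j := \E[d_{n,j}^2 \mid \F_{n,j-1}]$ so that the target equals $\Var[T_n]$ with $T_n = \sum_{j=1}^n V_j$. I would study $\Var[T_n]$ through the Doob martingale $N_i := \E[T_n \mid \F_{n,i}]$, $0 \le i \le n$. Since each $V_j$ is $\F_{n,j-1}$-measurable and the tower rule gives $\E[V_j \mid \F_{n,i}] = \E[d_{n,j}^2 \mid \F_{n,i}]$ for $j > i$, the conditional variance identity \eqref{eq:variance-functions-general-f-BASIC} collapses $N_i$ to the transparent form
$$
N_i = \sum_{j=1}^i V_j + w_{n-i}(M_i).
$$
Computing $N_i - N_{i-1} = V_i + w_{n-i}(M_i) - w_{n-i+1}(M_{i-1})$ and then using $\E[N_i - N_{i-1} \mid \F_{n,i-1}] = 0$ to solve for the $\F_{n,i-1}$-measurable combination $V_i - w_{n-i+1}(M_{i-1})$ gives the clean identity
$$
N_i - N_{i-1} = w_{n-i}(M_i) - \E[w_{n-i}(M_i) \mid \F_{n,i-1}].
$$
Orthogonality of the martingale differences, together with $\Var[X \mid \G] \le \E[(X-c)^2 \mid \G]$ applied with the $\F_{n,i-1}$-measurable choice $c = w_{n-i}(M_{i-1})$, then gives
$$
\Var[T_n] \le \sum_{i=1}^n \E\bigl[(w_{n-i}(M_i) - w_{n-i}(M_{i-1}))^2\bigr].
$$

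\textbf{Pointwise bound on the Markov increment.} By the update rule \eqref{eq:running-maximum-general-f}, $M_i = M_{i-1}$ off the event that $X_{n,i}$ is selected, so the $i$-th summand vanishes there. On the selection event, $M_i = X_{n,i}$ lies in $[M_{i-1},h_{n-i+1}(M_{i-1})]$, so the threshold characterization \eqref{eq:optimal-threshold-general-f} together with the monotonicity of $v_{n-i}$ (Lemma~\ref{lm:value-function-decreasing-general-f}) gives the tight envelope $0 \le v_{n-i}(M_{i-1}) - v_{n-i}(X_{n,i}) \le 1$. Decomposing $w_{n-i} = \tfrac{1}{3}v_{n-i} + R_{n-i}$, the Conditional Variance Bound \eqref{eq:Conditional-in-General} tells us that $-2 \le R_{n-i}(\cdot) \le \tfrac{2}{3}(1 + \log(n-i))$, so a triangle-inequality split gives
$$
|w_{n-i}(X_{n,i}) - w_{n-i}(M_{i-1})| \le \tfrac{1}{3} + \tfrac{8}{3} + \tfrac{2}{3}\log n = 3 + \tfrac{2}{3}\log n
$$
on the selection event.

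\textbf{Completion.} Pulling out this pointwise constant and summing over $i$ yields
$$
\Var[T_n] \le \bigl(3 + \tfrac{2}{3}\log n\bigr)^2 \sum_{i=1}^n \P(X_{n,i}\text{ is selected}) = \bigl(3 + \tfrac{2}{3}\log n\bigr)^2 \E[L_n(\pi_n^*)].
$$
A discriminant check shows $(3 + \tfrac{2}{3}\log n)^2 \le 18 + (\log n)^2$ for every $n \ge 1$, and invoking the upper bound \eqref{eq:GnedinUpperBound} closes the argument.

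\textbf{Main obstacle.} The delicate step is the opening identification of the Doob differences: once one sees that the predictable sum $T_n$ has Doob-martingale differences that reduce to pure centered increments of $w_{n-i}(M_i)$ along the Markov chain $(M_i)$, the rest is driven by two already-established features of the value and variance functions---the $O(1)$ drop of $v_{n-i}$ at an accepted step that is built into the threshold definition, and the $O(\log n)$ proximity of $w_{n-i}$ to $\tfrac{1}{3} v_{n-i}$ supplied by the combined uniform/exponential analysis of Section~\ref{se:Variance-Bounds-in-General}.
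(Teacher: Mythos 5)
Your proposal is correct and takes essentially the same route as the paper: both identify the Doob martingale of the predictable sum, collapse its increments to $w_{n-i}(M_i)-\E[w_{n-i}(M_i)\mid\F_{n,i-1}]$, localize the increment to the selection event, and control it via the two-sided proximity of $w_{n-i}$ to $\frac{1}{3}v_{n-i}$ from \eqref{eq:Conditional-in-General} together with the unit drop of $v_{n-i}$ across $[M_{i-1},h_{n-i+1}(M_{i-1})]$. The only cosmetic difference is at the very end: the paper passes through $(a+b)^2\le 2a^2+2b^2$ to reach $18+\frac{8}{9}(\log n)^2$, while you verify $(3+\frac{2}{3}\log n)^2\le 18+(\log n)^2$ directly by a discriminant calculation; both close the argument with \eqref{eq:GnedinUpperBound}.
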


\begin{proof}
We fix $n \geq 1$ and simplify the notation by dropping the subscript $n$ on the martingale
difference sequence and the filtration. We then let
$$
V \equalbydef \sum_{j=1}^n \E[ d_j^2  \ | \ \F_{j-1} ]
$$
and consider the martingale $\{ V_i : 0 \leq i \leq n\}$
defined by setting
$$
 V_i \equalbydef \E [ V   \ | \ \F_i \ ] \quad \quad \text{for } 0 \leq i \leq n.
$$
One has the initial and terminal values
$$
V_0 = \sum_{j=1}^n \E[ d_j^2 ] = \Var[L_n(\pi^*_n)] \quad \quad \text{ and } \quad \quad V_n = V,
$$
and if we introduce the new martingale differences
$\Delta_i = V_i - V_{i-1},$
$1 \leq i \leq n$,
then telescoping and orthogonality give us
\begin{equation}\label{eq:variance-CLT-proof}
V_n - V_0 = \sum_{i=1}^n \Delta_i \quad \quad \text{and}
\quad \quad
\Var[V_n] = \Var \bigg[ \sum_{j=1}^n \E[ d_j^2  \ | \ \F_{j-1} ] \bigg]  = \sum_{i=1}^n \E[\Delta_i^2].
\end{equation}
For $1 \leq j \leq i+1$ all of the summands $\E[ d_j^2  \ | \ \F_{j-1} ]$ are  $\F_i$-measurable,
so we have
\begin{align*}
\Delta_i  = & \sum_{j=1}^i \E[ d_j^2  \ | \ \F_{j-1} ] + \E\bigg[ \sum_{j=i+1}^n \E[ d_j^2  \ | \ \F_{j-1} ] \ | \ \F_i \bigg] \\
            & - \sum_{j=1}^i \E[ d_j^2  \ | \ \F_{j-1} ] - \E\bigg[ \sum_{j=i+1}^n \E[ d_j^2  \ | \ \F_{j-1} ] \ | \ \F_{i-1} \bigg].
\end{align*}
The first and the third sum cancel, and we obtain
\begin{align} \label{eq:Delta_i-identity}
\Delta_i & = \sum_{j=i+1}^n \E[ d_j^2  \ | \ \F_i ] - \E \bigg[  \sum_{j=i+1}^n \E[ d_j^2  \ | \ \F_{i} ]  \ | \ \F_{i-1} \bigg]\\
         & = w_{n-i}(M_i) - \E[w_{n-i}(M_i) \ | \  \F_{i-1}], \nonumber
\end{align}
where in the last line we twice used the formula \eqref{eq:variance-functions-general-f-BASIC} for the conditional variance.

Next, we set
$$G_i \equalbydef \{ \omega: X_i(\omega) \in [M_{i-1}(\omega), h_{n-i+1}(M_{i-1}(\omega)) ] \},$$
so, in words,  $G_i$ is the set of all
$\omega$ for which the observation $X_i(\omega)$
is selected at time $i$ under the optimal policy $\pi_n^*$.
By the recursive definition \eqref{eq:running-maximum-general-f}
of the running maximum $M_i$, we then have the decomposition
\begin{equation}\label{eq:conditional-variance-decomposition}
w_{n-i}(M_i) = w_{n-i}(M_{i-1}) + \{ w_{n-i}(X_i) - w_{n-i}(M_{i-1}) \}  \1( G_i ).
\end{equation}
In fact, one can replace $w_{n-i}$ with any function here, and it will be useful to also note that
\begin{equation}\label{eq:value-function-decomposition}
v_{n-i}(M_i) = v_{n-i}(M_{i-1}) + \{ v_{n-i}(X_i) - v_{n-i}(M_{i-1}) \}  \1( G_i ).
\end{equation}

The first summand on the right-hand side of \eqref{eq:conditional-variance-decomposition}
is $\F_{i-1}$-measurable, so, if we rewrite \eqref{eq:Delta_i-identity}
using \eqref{eq:conditional-variance-decomposition} we obtain
$$
\Delta_i = \{ w_{n-i}(X_i) - w_{n-i}(M_{i-1}) \}  \1( G_i ) - \E[ \{ w_{n-i}(X_i) - w_{n-i}(M_{i-1}) \}  \1( G_i ) \ | \ \F_{i-1}].
$$
When we square this identity and take the conditional expectation we find
\begin{equation}\label{eq:Delta-i-second-moment-upper-bound}
\E[\Delta_i^2 \ | \ \F_{i-1}]   \leq \E[ \{ w_{n-i}(X_i) - w_{n-i}(M_{i-1}) \}^2  \1( G_i ) \ | \ \F_{i-1}],
\end{equation}
and all that remains is to estimate the difference $\{ w_{n-i}(X_i) - w_{n-i}(M_{i-1}) \}  \1( G_i )$.

Now consider the \emph{upper bound} in \eqref{eq:Conditional-in-General}
and replace $w_{n-i}(M_i)$ and $v_{n-i}(M_i)$ with their decompositions
\eqref{eq:conditional-variance-decomposition} and \eqref{eq:value-function-decomposition}. When we move the
term $w_{n-i}(M_{i-1})$ to the right side, we have
\begin{align}\label{eq:clt-proof-bound0}
 \{ w_{n-i}(X_i) - w_{n-i}(M_{i-1}) \}  \1( G_i )
  \leq {} & \frac{1}{3}\{ v_{n-i}(X_i) - v_{n-i}(M_{i-1}) \}  \1( G_i )\\
 &  + \frac{1}{3}v_{n-i}(M_{i-1}) - w_{n-i}(M_{i-1}) \nonumber\\
 &  + \frac{2}{3} ( 1 + \log n ). \nonumber
\end{align}
By the \emph{lower bound} in \eqref{eq:Conditional-in-General} the second summand is bounded by two, and,
to estimate the first summand, we note that  the characterization \eqref{eq:optimal-threshold-general-f}
for the optimal threshold function and the monotonicity of the value function give us
\begin{equation}\label{eq:clt-proof-bound1}
\frac{1}{3} | \, v_{n-i}(X_i) - v_{n-i}(M_{i-1}) \, |  \1( G_i ) \leq \frac{1}{3} \1( G_i ).
\end{equation}
The left-hand side of \eqref{eq:clt-proof-bound0} is zero off of the set $G_i$, so using \eqref{eq:clt-proof-bound1}
we see that \eqref{eq:clt-proof-bound0} gives us
\begin{equation}\label{eq:upper-bound-difference-cond-variance}
 \{ w_{n-i}(X_i) - w_{n-i}(M_{i-1}) \}  \1( G_i )  \leq \{ 3 + \frac{2}{3} \log n \} \1( G_i ).
\end{equation}
A parallel argument gives us the complementary inequality,
\begin{equation}\label{eq:lower-bound-difference-cond-variance}
- \{ 3 + \frac{2}{3} \log n \} \1( G_i ) \leq  \{ w_{n-i}(X_i) - w_{n-i}(M_{i-1}) \}  \1( G_i ).
\end{equation}
Specifically, one now begins with the
\emph{lower bound} in \eqref{eq:Conditional-in-General} and replaces
$w_{n-i}(M_i)$ and $v_{n-i}(M_i)$ with their decompositions.

Taken together \eqref{eq:upper-bound-difference-cond-variance} and \eqref{eq:lower-bound-difference-cond-variance}
imply
$$
| \, w_{n-i}(X_i) - w_{n-i}(M_{i-1}) \ | \   \1( G_i )  \leq \{ 3 + \frac{2}{3} \log n \} \1( G_i ),
$$
so we can square both sides and take conditional expectations with respect to $\F_{i-1}$.
By \eqref{eq:Delta-i-second-moment-upper-bound} and the definition of $G_i$, we then have
$$
\E[\Delta_i^2 \ | \ \F_{i-1}]  \leq \{ 18 + \frac{8}{9}( \log n )^2 \} \E[  \1( X_i \in [M_{i-1}, h_{n-i+1}(M_{i-1}) ] ) \ | \ \F_{i-1} ],
$$
so if we drop the factor $8/9$, take total expectations, and sum we get
\begin{equation}\label{eq:VarianceBoundLastLine}
\sum_{i=1}^n \E[\Delta_i^2] \leq \{ 18 + ( \log n )^2 \} \sum_{i=1}^n \E[ \1( X_i \in [M_{i-1}, h_{n-i+1}(M_{i-1}) ] )].
\end{equation}
By \eqref{eq:variance-CLT-proof} the sum on the left is variance of $V_n = \sum_{j=1}^n \E[ d_j^2  \ | \ \F_{j-1} ]$,
and by \eqref{eq:Ln} the sum of the expected values on the right is equal to $\E[L_n(\pi^*_n)]$.
Finally, we know
that $ \E[L_n(\pi^*_n)] < (2n)^{1/2}$ from \eqref{eq:GnedinUpperBound} and the argument of Section \ref{se:inferences-uniform-model},
so \eqref{eq:VarianceBoundLastLine} completes the proof of the lemma.
\end{proof}

\section{Concluding Observations}\label{se:Conclusions}

The idea of ``spending symmetry" that was mentioned in Section \ref{se:DP-special-properties}
originates with an instructive essay of \citeasnoun[Section 1.4]{Tao:AMS2009}. This notion
can be cast in stunning generality, but here it turns out to be resolutely concrete and very useful.

The variance lower bound of \eqref{eq:variance-in-general} had been known to us for some years, but dogged
analysis of the uniform model left us without an upper bound of comparable quality. A general Markov decision problem (MDP) bound
in \citeasnoun{ArlGanSte:OR2014} would give $\Var[L_n(\pi_n^*)] \leq \E[L_n(\pi_n^*)]$,
but here the MDP bound is too weak by a factor of three.
It cannot serve even as good motivation for a central limit theorem.

With such a long tradition of immediate reduction to the uniform model, it was surprising to see how fruitful it could be to
simultaneously use the exponential model --- even though the distribution of $L_n(\pi_n^*)$ is the \emph{same} under either model.
Still, with different value functions come different qualitative features, and the convexity of the value functions under the
exponential model leads in a natural way to the needed upper bound of the variance. This opened up the
way to the rest of the analysis.

We mentioned one open problem earlier (see Remark \ref{rem:concavity-n-offline}),
and there is a related
problem that deserves some thought. In the offline selection problem, the distribution of the length of the longest increasing subsequence of a sequence of $n$ independent
uniformly distributed random variables is the same as the distribution of the
length of the longest increasing subsequence of a random permutation
of the integers $\{1,2,\ldots, n\}$. This equivalence is lost in the online selection problem, and it is unclear how much of
Theorem \ref{thm:OurCLT} can be recaptured.

For example, if we write $L^{\rm perm}_n(\pi^*_n)$
for the analog of $L_n(\pi^*_n)$ where now one chooses a random permutation of $\{1,2,\ldots, n\}$,
then, by an argument of Burgess Davis given in \citeasnoun{SamSte:AP1981},
one does have $\E [L^{\rm perm}_n(\pi^*_n)] \sim (2n)^{1/2}$ as $n \rightarrow \infty$.
Unfortunately, mean bounds like those
of Theorem \ref{thm:OurCLT} cannot be achieved in this way, and variance bounds that would be good enough to support a central
limit theorem are even more remote. Nevertheless, some analog of Theorem \ref{thm:OurCLT} is quite likely to be true.


\end{document}